\newtheorem{thm}{Theorem}[section]
\newtheorem{cor}[thm]{Corollary}
\newtheorem{lem}[thm]{Lemma}
\newtheorem{prop}[thm]{Proposition}
\theoremstyle{definition}
\newtheorem{defin}[thm]{Definition}
\newtheorem{rem}[thm]{Remark}
\newtheorem{exa}[thm]{Example}
\numberwithin{equation}{section}
\author[A. Itaba]{Ayako Itaba}
\address{
Department of Mathematics, 
Faculty of Science, Tokyo University of Science\\
1-3 Kagurazaka, Shinjyuku-ku, Tokyo, 162-8601, Japan}
\email{itaba@rs.tus.ac.jp}
\author[M. Matsuno]{Masaki Matsuno}
\address{
Graduate School of Integrated Science and Technology, 
Shizuoka University\\
Ohya 836, Shizuoka 422-8529, Japan}
\email{matsuno.masaki.14@shizuoka.ac.jp}
\keywords{AS-regular algebras, 
          Geometric algebras, 
          Calabi-Yau algebras, Superpotentials, 
          Koszul algebras, Elliptic curves. }
\subjclass[2020]{16W50, 16S37, 16D90, 16E65. }
\DeclareMathOperator{\Pic}{Pic}
\DeclareMathOperator{\Div}{Div}
\newcommand{\la}{\lambda}
\begin{document}

\title[AS-regularity of geometric algebras of plane cubic curves]
{AS-regularity of geometric algebras of plane cubic curves}
\begin{abstract}
In noncommutative algebraic geometry, 
an Artin-Schelter regular (AS-regular) algebra  
is one of the main interests 
and every $3$-dimensional quadratic AS-regular algebra is a 
geometric algebra introduced by Mori whose point scheme 
is either $\mathbb{P}^{2}$ or a cubic curve in $\mathbb{P}^{2}$ 
by Artin-Tate-Van den Bergh.
In the preceding paper 
by the authors, 
we determined the all possible defining relations 
for these geometric algebras. 
However, the authors did not check the AS-regularity 
of these geometric algebras.
In this paper, by using twisted superpotentials and 
twists of superpotentials in the sense of Mori-Smith, 
we check the AS-regularity of geometric algebras  
whose point schemes are not elliptic curves. 
For geometric algebras whose point schemes are elliptic curves, 
we give a simple condition to be $3$-dimensional quadratic AS-regular algebras. 
As an application, 
we show that every $3$-dimensional quadratic AS-regular algebra 
is graded Morita equivalent to a Calabi-Yau AS-regular algebra. 
\end{abstract}
\maketitle
\section{Introduction}
In noncommutative algebraic geometry, 
an Artin-Schelter regular (AS-regular) algebra 
introduced by Artin-Schelter \cite{AS} is one of the main interests.
Artin-Tate-Van den Bergh \cite{ATV1} 
proved that there exists a one-to-one correspondence 
between $3$-dimensional AS-regular algebras  and regular geometric pairs. 
This work convinced us that algebraic geometry is very useful 
to study even noncommutative algebras. 
Dubois-Violette \cite{D} and Bocklandt-Schedler-Wemyss \cite{BSW}
showed that every $3$-dimensional quadratic AS-regular algebra $A$ 
is isomorphic to a derivation-quotient algebra $\mathcal{D}(w)$ 
of a twisted superpotential $w$, 
and Mori-Smith \cite{MS1} showed that such 
$w$ is unique up to non-zero scalar multiples. 
So it is interesting to study AS-regular algebras using 
both algebraic geometry and twisted superpotentials. 
In fact, Mori-Smith \cite{MS2} classified $3$-dimensional quadratic 
Calabi-Yau AS-regular algebras by using superpotentials.

In the preceding paper \cite{IM} by the authors, 
in terms of geometric algebras defined by Mori \cite{Mo}, 
we determined all possible defining relations 
for geometric algebras whose point schemes are
either $\mathbb{P}^{2}$ or cubic curves in $\mathbb{P}^{2}$
and classified them up to graded algebra isomorphism 
and up to graded Morita equivalence. 
However, in \cite{IM}, 
the authors did not check the AS-regularity 
of these classified geometric algebras.
So, one of the aims of this paper is to check the AS-regularity of them. 
Note that, Iyudu-Shkarin \cite{IS} recently 
gave a list of defining relations 
of $3$-dimensional AS-regular algebras by using twisted superpotentials, 
but no proof of AS-regularity of these algebras
was given in their paper. 
For geometric algebras listed in \cite[Theorem 3.1]{IM},
we give a list of candidates of twisted superpotentials
to serve our purposes (see Proposition \ref{TSP}). 
By using this list, we give a complete list of superpotentials
whose derivation-quotient algebras are $3$-dimensional quadratic
Calabi-Yau AS-regular algebras
whose point schemes are not elliptic curves 
(see Theorem \ref{SP}).
By using {\it a twist of a superpotential} (in the sense of \cite{MS1}), 
we showed that potentials listed in Proposition \ref{TSP}
are in fact twisted superpotentials and derivation-quotient algebras of them
are $3$-dimensional quadratic AS-regular algebras
(see Theorems \ref{prop-3} and \ref{thm-RTSP}). 
For a geometric algebra $A$ whose point scheme is an elliptic curve 
in $\mathbb{P}^{2}$, 
we give a simple condition that $A$ is AS-regular 
(see Theorem \ref{Main}).
As an application of Corollary \ref{cor_conj} and Theorem \ref{Main}, 
we prove the following theorem (see Theorem \ref{Main2}). 
\begin{thm}
\label{conj}
For every $3$-dimensional quadratic AS-regular algebra $A$, 
  there exists a Calabi-Yau AS-regular algebra $S$ 
  such that $A$ and $S$ are graded Morita equivalent. 
\end{thm}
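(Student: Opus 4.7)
The plan is to apply the Artin-Tate-Van den Bergh correspondence to realize $A$ as a geometric algebra $\A$, where $E$ is either $\bbP$ or a cubic curve in $\bbP$, and then split the argument according to whether $E$ is an elliptic curve. In both cases the goal is to exhibit, inside the graded Morita equivalence class of $A$, a representative whose defining relations come from a genuine (untwisted) superpotential, since by the results cited in the introduction (notably Dubois-Violette, Bocklandt-Schedler-Wemyss, and Mori-Smith) such derivation-quotient algebras are exactly the $3$-dimensional quadratic Calabi-Yau AS-regular algebras.

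When the point scheme $E$ is not an elliptic curve, I would invoke Corollary \ref{cor_conj} directly. That corollary rests on Proposition \ref{TSP} and the superpotential list of Theorem \ref{SP}: the graded Morita classes of these geometric algebras have been enumerated in \cite{IM}, and Theorem \ref{SP} pins down in each such class a representative whose defining relations come from an untwisted superpotential, and this representative is the desired Calabi-Yau AS-regular algebra $S$.

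When $E$ is an elliptic curve, I would first use Theorem \ref{Main} to single out those $\sigma$ for which $\A$ is AS-regular. Combining this with the graded Morita equivalence description for elliptic geometric algebras from \cite{IM} --- which permits replacing $\sigma$ within its Morita orbit by any $\sigma'$ differing from $\sigma$ by an automorphism of $E$ extending to $\bbP$ --- I would produce a representative $\Ad$ inside the Morita class of $A$ whose automorphism $\sigma'$ is a translation with respect to the group law on $E$. Since geometric algebras whose defining automorphism is a translation are known to be Calabi-Yau in the Mori-Smith framework, this $\Ad$ supplies the required $S$.

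The main obstacle I expect is in the elliptic case: one must verify both that the AS-regularity criterion of Theorem \ref{Main} is preserved under the Morita move replacing $\sigma$ by a translation, and that the translation representative obtained is genuinely Calabi-Yau rather than merely AS-regular. The non-elliptic case reduces, by contrast, to matching the explicit tables of \cite{IM} against the superpotential list in Theorem \ref{SP}.
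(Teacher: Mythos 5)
Your proposal follows essentially the same route as the paper: the non-elliptic case is handled by Corollary \ref{cor_conj} together with Lemma \ref{twistingequi}, and the elliptic case by Theorem \ref{Main}, whose equivalence $(1)\Leftrightarrow(3)$ already reduces $A$ to a $3$-dimensional Sklyanin algebra $\mathcal{A}(E,\sigma_{p})$, which is Calabi--Yau because its defining relations come from the genuine superpotential $a(xyz+yzx+zxy)+b(xzy+zyx+yxz)+c(x^{3}+y^{3}+z^{3})$ (Lemma \ref{CY-superpotential}). The two worries you raise at the end are already absorbed into the statement of Theorem \ref{Main} and the explicit superpotential, so the argument is correct as proposed.
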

\noindent
Theorem \ref{conj} tells us that, 
for a $3$-dimensional quadratic AS-regular algebra $A$, 
to study the noncommutative projective scheme ${\rm Proj}_{\rm nc}A$ of $A$ 
in the sense of  Artin-Zhang \cite{AZ} 
is reduced to study ${\rm Proj}_{\rm nc}S$ 
for the Calabi-Yau AS-regular algebra $S$. 
Note that \cite[Example 14]{Ueyama2} gave 
one example of a $3$-dimensional cubic AS-regular algebra 
which is not graded Morita equivalent 
to any Calabi-Yau AS-regular algebra. 

This paper is organized as follows: 
In Section 2, 
we recall the definition of an AS-regular algebra defined 
by Artin-Schelter \cite{AS}, a Calabi-Yau algebra 
by Ginzburg \cite{G}, a twisted superpotential and a twist of 
a superpotential in the sense of \cite{MS1}.
Also, we recall Zhang's twist and twisted algebras from \cite{Z} 
and some lemmas which are needed to show our Theorem \ref{conj}. 
Moreover, we recall the definitions of  a geometric algebra 
for quadratic algebras introduced by Mori \cite{Mo}, and 
the result of our preceding paper \cite{IM}. 
In Section 3, we prove Theorem \ref{conj} 
for geometric algebras 
whose point schemes are not elliptic curves. 
Finally, in Section 4, we prove Theorem \ref{conj} 
for geometric algebras whose point schemes 
are elliptic curves in $\mathbb{P}^{2}$. 
\section{Preliminaries}
Throughout this paper, 
let $k$ be an algebraically closed field of 
characteristic $0$. 
A graded $k$-algebra means an $\mathbb{N}$-graded algebra
$A=\bigoplus_{i\in \mathbb{N}}A_{i}$. 
A {\it connected graded} $k$-algebra $A$ is 
a graded $k$-algebra such that $A_{0}=k$. 
We denote by ${\rm GrMod}\,A$ the category of graded right $A$-modules 
and graded right $A$-module homomorphisms and 
we say that two graded $k$-algebras $A$ and $B$ are graded Morita equivalent 
if two categories ${\rm GrMod}\,A$ and ${\rm GrMod}\,B$ are equivalent. 
\subsection{AS-regular algebras and Calabi-Yau algebras}
Let $A$ be 
a connected graded  $k$-algebra finitely generated by elements of positive degree. 
 We recall that 
 $${\rm GKdim}\,A:=
 {\rm inf}\{\alpha\in \mathbb{R} \mid {\rm dim}_{k}(\sum_{i=0}^{n}A_{i})\leq n^{\alpha}
 \text{ for all }n \gg 0\}
 $$
 is called the {\it Gelfand-Kirillov dimension} of $A$. 
\begin{defin}[{\cite[page 171]{AS}}]
\label{AS-reg}
A connected graded $k$-algebra $A$ is called 
a {\it $d$-dimensional Artin-Schelter regular 
{\rm (}AS-regular{\rm )} algebra} 
if $A$ satisfies the following conditions: 
\begin{enumerate}[(i)]
 \item ${\rm gldim}\,A=d<\infty$, 
 \item ${\rm GKdim}\,A<\infty$, 
 \item ({\it Gorenstein condition})\quad
       ${\rm Ext}_{A}^{i}(k,A)\cong \left\{
                                         \begin{array}{ll}
                                         k   &\quad (i=d), \\
                                         0   &\quad (i\neq d). 
                                         \end{array}
                                         \right.$
\end{enumerate}
\end{defin}
Any $3$-dimensional AS-regular algebra $A$ 
finitely generated in degree $1$ 
is a graded algebra isomorphic to an algebra of the form 
\[
k\langle x, y, z \rangle/(f_{1},f_{2},f_{3})\text{ (quadratic case) } ,
\ \text{ or }\ k\langle x, y\rangle/(g_{1}, g_{2})\text{ (cubic case) }
\]
where $f_{i}$ are homogeneous polynomials of degree $2$ 
and $g_{i}$ are homogeneous polynomials of degree $3$ 
(\cite[Theorem 1.5 (i)]{AS}). 
In this paper, 
we focus on $3$-dimensional quadratic AS-regular algebras. 

Let $V$ be a $3$-dimensional $k$-vector space and 
$T(V)$ the tensor algebra of $V$. 
We choose a basis $\{x_{1},x_{2},x_{3}\}$ of $V$. 
Also, for an algebra $T(V)/(R)$,
we choose a basis $\{f_{1},f_{2},f_{3}\}$ of $R \subset V^{\otimes 2}$. 
We set $\bm{x}:=(x_{1},x_{2},x_{3})^{t}$ and 
$\bm{f}:=(f_{1},f_{2},f_{3})^{t}$, 
where, for a matrix $N$, $N^{t}$ means the transpose of $N$. 
There is a unique $3\times 3$ matrix $M$ with entries in $V$ 
such that $\bm{f}=M\bm{x}$ (see {\cite[page 177]{AS}). 
From \cite[page 34]{ATV1}, $T(V)/(R)$ is called {\it standard} 
if there are bases for $V$ and $R$ such that 
the entries in $\bm{x}^{t}M$ are also a basis for $R$. 
\begin{thm}[{\cite[Theorem 1]{ATV1}}] 
\label{ATV1_thm1}
Let $V$ be a $3$-dimensional $k$-vector space and 
$R$ a $3$-dimensional subspace of $V^{\otimes 2}$. 
Then $T(V)/(R)$ is a $3$-dimensional AS-regular algebra 
if and only if $T(V)/(R)$ is standard and 
the common zero locus in $\mathbb{P}^{2}$ of the $2\times 2$ minors 
of the matrix $M$ in the above is empty. 
\end{thm}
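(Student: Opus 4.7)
The plan is to construct and analyze the natural Koszul-type complex of free right $A$-modules
\[
0 \to A(-3) \xrightarrow{d_{3}} A(-2)^{3} \xrightarrow{d_{2}} A(-1)^{3} \xrightarrow{d_{1}} A \to k \to 0,
\]
where $A = T(V)/(R)$, the map $d_{1}$ is right multiplication by the column $\bm{x}$, and $d_{2}$ is right multiplication by the matrix $M$ satisfying $\bm{f}=M\bm{x}$. In each direction, the aim is to show that, under the hypotheses, this is the minimal graded free resolution of the trivial module $k$ and is self-dual; such a resolution immediately forces ${\rm gldim}\,A = 3$, the Gorenstein condition, and a Hilbert series equal to that of $k[x,y,z]$, which yields ${\rm GKdim}\,A = 3 < \infty$.

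For the direction $(\Leftarrow)$, standardness provides a column $\bm{y}$ of linear forms with $\bm{x}^{t}M \equiv \bm{y}^{t}\bm{x}$ modulo $(R)$ (the transposed relations lie in $(R)$); taking $d_{3}$ to be right multiplication by $\bm{y}$ makes the above into a complex. The crux is exactness. The vanishing of the common locus of the $2\times 2$ minors of $M$ in $\mathbb{P}^{2}$ forces $M$ to have pointwise rank $2$ everywhere, so the complex of coherent sheaves on $\mathbb{P}^{2}$ induced by $M$ is locally a Koszul-type complex and hence exact. A Hilbert series comparison with $k[x,y,z]$ then promotes this to exactness of the original complex of $A$-modules degree by degree, after which Gorenstein self-duality follows from the shape of the resolution.

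For the direction $(\Rightarrow)$, the assumption that $A$ is $3$-dimensional AS-regular, together with the quadratic hypothesis, forces the minimal graded free resolution of $k$ to have Betti vector $(1,3,3,1)$ with internal degrees $(0,1,2,3)$, as dictated by the Gorenstein condition. The middle differential must then be given by the matrix $M$, up to a change of basis of $R$. Dualizing the resolution via $\Ext_{A}^{3}(k,A)\cong k$ produces a minimal resolution of $k$ as a left $A$-module in which $M^{t}$ appears, and translating between the two bases yields exactly the standardness condition. Finally, a common zero of the $2\times 2$ minors would produce an excess syzygy localized at that point, contradicting the fact that the minimal resolution of $k$ has length $3$ and the indicated Betti numbers.

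The principal obstacle is the exactness step in $(\Leftarrow)$: converting a pointwise rank condition on $M$ into exactness of an unbounded complex of free $A$-modules is delicate, and is handled in \cite{ATV1} by a combined Hilbert series and point-module argument that simultaneously controls the dimension of $A_{n}$ for every $n$ and the local behaviour of $M$ at (the now empty) common zero locus of its minors.
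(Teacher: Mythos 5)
First, note that the paper offers no proof of this statement at all: it is imported verbatim as \cite[Theorem 1]{ATV1}, so there is no internal argument to compare yours against; what follows measures your sketch against the actual proof in \cite{ATV1}. Your outline does capture the architecture of that proof: the candidate complex $0 \to A(-3) \to A(-2)^{3} \to A(-1)^{3} \to A \to k \to 0$, the role of standardness in closing it up into a self-dual complex, and, for the forward direction, the fact that the Gorenstein condition forces Betti numbers $(1,3,3,1)$ in internal degrees $(0,1,2,3)$ so that dualizing the resolution yields standardness. That part is sound in spirit.

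The genuine gap is the exactness step in $(\Leftarrow)$, and it is not a technicality to be outsourced --- it is the entire content of the theorem. The mechanism you propose for it would not work as stated. The complex above consists of free modules over the \emph{noncommutative} algebra $A$; it does not sheafify over $\mathbb{P}^{2}$, and there is no localization at a point $p \in \mathbb{P}^{2}$ available for it, so ``$M$ has pointwise rank $2$, hence the complex is locally Koszul and exact'' is not an argument about the object you need. What $M$ does define on $\mathbb{P}^{2}$ is a morphism $\mathcal{O}_{\mathbb{P}^{2}}(-1)^{3} \to \mathcal{O}_{\mathbb{P}^{2}}^{3}$, and the empty zero locus of the $2\times 2$ minors says this morphism has rank at least $2$ everywhere; what that controls is the scheme $\Gamma \subset \mathbb{P}(V^{\ast}) \times \mathbb{P}(V^{\ast})$ cut out by the multilinearized relations (equivalently, the truncated point modules), forcing $\Gamma$ to be the graph of an automorphism $\sigma$ of $E=\mathcal{V}(\det M)$. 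Passing from this geometric datum to exactness of the $A$-module complex is exactly where \cite{ATV1} must work: they build the twisted homogeneous coordinate ring $B(E,\sigma,\mathcal{L})$, produce a surjection $A \to B$, and use it to pin down $\dim_{k}A_{n}=\binom{n+2}{2}$ degree by degree. Your ``Hilbert series comparison with $k[x,y,z]$'' is circular at this point, since the Hilbert series of $A$ equals $(1-t)^{-3}$ only after exactness is established. The same defect infects your converse direction: ``an excess syzygy localized at that point'' has no meaning over $A$; the correct statement is that a common zero $p$ of the minors gives $\operatorname{rank} M(p) \leq 1$, hence a positive-dimensional fiber of $\Gamma$ over $p$, i.e.\ too many truncated point modules, which contradicts the Hilbert function forced by regularity. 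In short, the skeleton is right, but the load-bearing step is missing and the substitute you offer for it fails for noncommutativity and circularity reasons.
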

Here, we recall the definition of a Calabi-Yau algebra introduced by \cite{G}. 
\begin{defin}[{\cite[Definition 3.2.3]{G}}]
A $k$-algebra $S$ is called {\it $d$-dimensional Calabi-Yau} 
if $S$ satisfies the following conditions: 
\begin{enumerate}[(i)]
\item $\mathrm{pd}_{S^{\rm e}}S=d<\infty$, 
\item $
       \mathrm{Ext}_{S^{\rm e}}^{i}(S,S^{\rm {e}})\cong
       \begin{cases}
        S & \text{ if } i=d, \\
        0 & \text{ if } i \neq d
       \end{cases}
       $ \quad (as right $S^{\rm e}$-modules)
\end{enumerate}
where $S^{\rm e}=S^{{\rm op}} \otimes_{k} S$ 
is the enveloping algebra of $S$. 
\end{defin}
For example, 
it is known that 
an $n$-th polynomial ring $k[x_{1},x_{2},\ldots,x_{n}]$ is 
$n$-dimensional Calabi-Yau. 

Note that, for a $3$-dimensional quadratic AS-regular algebra $A$, 
the quadratic dual $A^{!}$ of $A$ 
is a Frobenius algebra by \cite[Proposition 5.10]{S}. 
Hence, we can consider the Nakayama automorphism $\nu_{A^{!}}$ of $A^{!}$. 
By using the following consequence proved 
by Reyes-Rogalski-Zhang \cite{RRZ}, 
we can determine whether these algebras $A$ are
Calabi-Yau algebras or not. 
\begin{lem}[{\cite[comments after the proof of Example 1.4]{RRZ}}]
\label{thmRRZ}
Let $A$ be a $3$-dimensional quadratic AS-regular algebra. 
Then $A$ is Calabi-Yau if and only if 
the Nakayama automorphism $\nu_{A^{!}}$ of $A^{!}$ is the identity 
{\rm (}that is, $A^{!}$ is symmetric{\rm )}. 
\end{lem}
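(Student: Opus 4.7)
The plan is to deduce this from two results of Reyes-Rogalski-Zhang that together control the Nakayama automorphism of a Koszul AS-regular algebra. First, every connected graded noetherian AS-regular algebra $A$ of global dimension $d$ is twisted (skew) Calabi-Yau: one has $\mathrm{Ext}^{i}_{A^{\rm e}}(A, A^{\rm e}) = 0$ for $i \neq d$ and $\mathrm{Ext}^{d}_{A^{\rm e}}(A, A^{\rm e}) \cong {}^{\nu_A}\!A$ for a unique graded algebra automorphism $\nu_{A}$, the Nakayama automorphism of $A$. By definition, $A$ is Calabi-Yau precisely when $\nu_{A}$ is the identity. So the task reduces to relating $\nu_{A}$ and $\nu_{A^{!}}$.

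First I would observe that any $3$-dimensional quadratic AS-regular algebra $A$ is Koszul; this is classical and follows from the shape of the minimal free resolution of $k$ given by Artin-Schelter, which is linear in the generators of $A_1$. This is what makes Koszul duality applicable and lets one compare $A$ with $A^{!}$.

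Next I would invoke the comparison theorem of Reyes-Rogalski-Zhang: for a Koszul AS-regular algebra $A$ with Frobenius Koszul dual $A^{!}$, the action of $\nu_{A}$ on the generating subspace $A_{1}$ coincides, under the canonical pairing $A_{1}\otimes A^{!}_{1}\to k$ identifying $A^{!}_{1}\cong A_{1}^{*}$, with the (inverse) transpose of the action of $\nu_{A^{!}}$ on $A^{!}_{1}$. Because both $A$ and $A^{!}$ are generated in degree one as graded algebras, each of $\nu_{A}$ and $\nu_{A^{!}}$ is entirely determined by its restriction to the degree-one piece.

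Combining these, $\nu_{A}|_{A_{1}}$ is the identity if and only if $\nu_{A^{!}}|_{A^{!}_{1}}$ is the identity, and hence $\nu_{A}=\mathrm{id}_{A}$ if and only if $\nu_{A^{!}}=\mathrm{id}_{A^{!}}$, which is exactly the assertion that $A^{!}$ is symmetric. The main obstacle is simply to unpack the RRZ comparison statement with the correct duality convention on the degree-one pieces; once that identification is set up, the equivalence drops out with no further computation.
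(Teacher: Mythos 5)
The paper offers no proof of this lemma: it is quoted verbatim from [RRZ]. Your proposal is a faithful reconstruction of the route through [RRZ], and its ingredients are all correct: connected graded AS-regular algebras are skew Calabi-Yau with a graded Nakayama automorphism $\nu_{A}$; Calabi-Yau means exactly $\nu_{A}=\mathrm{id}$; every $3$-dimensional quadratic AS-regular algebra is Koszul (the Artin-Schelter resolution of $k$ is linear); and $\nu_{A}$, $\nu_{A^{!}}$ are each determined by their degree-one restrictions since both algebras are generated in degree one.

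The one point you should not dismiss as ``unpacking the duality convention'' is a sign. The RRZ comparison relates $\nu_{A^{!}}|_{A^{!}_{1}}$ to the dual of $\nu_{A}|_{A_{1}}$ only up to the factor $(-1)^{d+1}$ (equivalently, up to composition with the automorphism $\epsilon$ acting by $-1$ in odd degree). As you state it, with no sign, the comparison is false in general: $k[x,y]$ is $2$-dimensional Calabi-Yau, yet its Koszul dual $\Lambda(x,y)$ is not symmetric --- its Nakayama automorphism is $-\mathrm{id}$ on degree one, since $\lambda(xy)=-\lambda(yx)$ for the Frobenius form $\lambda$. So the lemma is genuinely special to odd global dimension, and your argument only closes once you record that $(-1)^{d+1}=1$ because $d=3$. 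With that one sentence added, the proof is complete and is presumably the argument that the paper's bare citation of [RRZ] stands in for.
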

\subsection{Twisted algebras}
In this subsection, 
we recall the notion of twisting system and twisted algebra
introduced by Zhang \cite{Z}. 

A set of graded $k$-linear automorphisms of $A$, 
say $\theta=\{\theta_{i}\mid i \in \mathbb{N}\}$, 
is called a {\it twisting system} of $A$ 
if 
$$
\theta_{n}(a\theta_{l}(b))=\theta_{n}(a)\theta_{n+l}(b)
$$ 
for all $l,m,n \in \mathbb{N}$ and all $a \in A_{l} , b\in A_{m}$ 
(\cite[Definition 2.1]{Z}). 
Let $\theta=\{\theta_{i}\mid i \in \mathbb{N}\}$ 
be a twisting system of $A$. 
Then, a new graded and associative multiplication 
$\ast$ on the underlying graded $k$-vector space
$\bigoplus_{i\in \mathbb{N}}A_{i}$ 
is defined by 
$$
a\ast b:=a \theta_{l}(b)\quad\text{for all }a \in A_{l}, b\in A_{m}. 
$$
We denote by $1_{\theta}$ the identity with respect to $\ast$. 
The graded $k$-algebra $(\bigoplus_{i\in \mathbb{N}}A_{i},\ast,1_{\theta})$ 
is called the {\it twisted algebra} of $A$ 
by $\theta$ and is denoted by $A^{\theta}$ 
(\cite[Definition 2.3]{Z}). 
Any graded algebra automorphism $\theta \in {\rm Aut}\,A$ 
defines a twisting system of $A$ by 
$\{ \theta^{i} \}_{i \in \mathbb{N} }$. 
The twist of $A$ by this twisting system is denoted by $A^{\theta}$ 
instead of $A^{\{\theta^{i} \}_{i \in \mathbb{N}}}$ 
which is called the {\it twist} of $A$ by $\theta$. 
\begin{lem}[{\cite[Theorem 3.5]{Z}}]
\label{twistingequi}
Let $A$ and $A'$ be two connected graded $k$-algebras 
with $A_{1}\neq 0$.
Then $A'$ is isomorphic to a twisted algebra of $A$ 
if and only if 
${\rm GrMod}\,A$ and ${\rm GrMod}\,A'$ 
are equivalent. 
\end{lem}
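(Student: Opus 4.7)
The plan is to prove both directions of Zhang's theorem.

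For the \emph{only if} direction, suppose $A' \cong A^\theta$ for a twisting system $\theta = \{\theta_i\}_{i \in \mathbb{N}}$ of $A$. I construct an explicit functor $F_\theta : {\rm GrMod}\, A \to {\rm GrMod}\, A^\theta$ by changing module actions: to a graded right $A$-module $(M,\cdot)$, associate the same graded vector space equipped with the new action $m \star a := m \cdot \theta_n(a)$ for $m \in M_n$ and $a \in A$. The twisting-system identity $\theta_n(a\theta_\ell(b)) = \theta_n(a)\theta_{n+\ell}(b)$ is exactly what is needed for $\star$ to be associative relative to the twisted product on $A^\theta$. On morphisms, $F_\theta$ acts as the identity, since a degree-preserving $A$-linear map is automatically $A^\theta$-linear for the new actions. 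Because $\theta^{-1} := \{\theta_i^{-1}\}$ is a twisting system of $A^\theta$ inducing the inverse functor, this yields the desired equivalence.

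For the \emph{if} direction, suppose $F : {\rm GrMod}\, A \to {\rm GrMod}\, A'$ is an equivalence, and let $s, s'$ be the degree-shift auto-equivalences of ${\rm GrMod}\, A$ and ${\rm GrMod}\, A'$ respectively. The strategy is to reconstruct $A'$ as a twist $A^\theta$ by reading off $\theta$ from the discrepancy between $F \circ s$ and $s' \circ F$. Since $A_1 \neq 0$, the modules $A(n)$ form a complete set of pairwise non-isomorphic graded indecomposable projectives of $A$, and similarly for $A'$; so after composing $F$ with a suitable shift one may assume $F(A) \cong A'$. Then each $F(A(n))$ is some $A'(m)$, and using that $F$ transports the auto-equivalence $s$ to one conjugate to $s'$ on the orbit of $A'$, one obtains $F(A(n)) \cong A'(n)$ for every $n$. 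Fix isomorphisms $\phi_n : F(A(n)) \xrightarrow{\sim} A'(n)$ with $\phi_0 = \mathrm{id}$. The chain
\[
A'_n \cong \Hom_{A'}(A',A'(n)) \cong \Hom_A(A,A(n)) \cong A_n
\]
yields an identification of $A'$ and $A$ as graded vector spaces. The failure of the $\phi_n$ to be natural with respect to composition with the action maps $A \to A(\ell)$ defines graded $k$-linear automorphisms $\theta_n$ of $A$, and transporting the multiplication of $A'$ through these identifications gives, after a direct computation, $a \ast b = a\,\theta_\ell(b)$ for $a \in A_\ell$, $b \in A_m$, exhibiting $A' \cong A^\theta$.

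The main obstacle is in this second direction, and specifically in verifying that $\{\theta_n\}$ truly satisfies the twisting-system identity $\theta_n(a\theta_\ell(b)) = \theta_n(a)\theta_{n+\ell}(b)$. This identity is a precise reformulation of the compatibility of the $\phi_n$ with the composition of morphisms in ${\rm GrMod}\, A$ together with the naturality of $F$. The hypothesis $A_1 \neq 0$ enters exactly here: it ensures the shifts $A(n)$ are pairwise non-isomorphic and rigidifies each $\phi_n$ up to a nonzero scalar, so that the remaining ambiguity only alters $\theta$ by an inner twisting system and hence yields an isomorphic twisted algebra. Without $A_1 \neq 0$ the identifications $F(A(n)) \cong A'(n)$ can fail, and with them the construction of the twisting system.
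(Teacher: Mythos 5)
The paper does not actually prove this lemma; it is quoted directly from Zhang \cite{Z} (Theorem 3.5), so there is no internal argument to compare against. Measured against Zhang's own proof, your outline has the right architecture: the ``only if'' direction via the functor $M \mapsto M^{\theta}$ (same underlying space, new action $m \star a = m\,\theta_{n}(a)$ for $m \in M_{n}$) is complete and correct --- the twisting identity is exactly the associativity of the new action, and $\{\theta_{i}^{-1}\}$ is a twisting system of $A^{\theta}$ inverting the construction --- and the ``if'' direction correctly identifies the strategy of reading off a twisting system from the failure of $F$ to commute with the shift functors.

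The gap is in the central step of the ``if'' direction, namely the claim $F(A(n)) \cong A'(n)$. You justify it by saying that $F$ ``transports the auto-equivalence $s$ to one conjugate to $s'$ on the orbit of $A'$,'' but that is precisely what has to be proved: an abstract equivalence of graded module categories comes with no a priori compatibility with the shifts. What one actually has, after normalizing $F(A) \cong A'$, is a bijection $\phi:\mathbb{Z} \to \mathbb{Z}$ with $F(A(n)) \cong A'(\phi(n))$ and $\phi(0)=0$, and one must show $\phi=\mathrm{id}$. This is exactly where $A_{1} \neq 0$ enters: since $\mathrm{Hom}_{\mathrm{GrMod}\,A}(A(n),A(n+1)) \cong A_{1} \neq 0$ and $F$ is fully faithful, $A'_{\phi(n+1)-\phi(n)} \neq 0$, which forces $\phi(n+1)-\phi(n) \geq 0$ because $A'$ is $\mathbb{N}$-graded, and $>0$ by injectivity of $\phi$; a strictly increasing bijection of $\mathbb{Z}$ fixing $0$ is the identity. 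By contrast, the two roles you assign to the hypothesis are misplaced: the shifts $A(n)$ are pairwise non-isomorphic for any nonzero connected graded algebra, and each $\phi_{n}$ is rigid up to a nonzero scalar because $\mathrm{End}_{\mathrm{GrMod}\,A'}(A'(n)) \cong A'_{0}=k$; neither fact uses $A_{1} \neq 0$. Finally, the verification that the resulting $\theta_{n}$ satisfy $\theta_{n}(a\theta_{\ell}(b))=\theta_{n}(a)\theta_{n+\ell}(b)$ is asserted rather than carried out; it does follow from associativity of composition of shifted morphisms and naturality of $F$, but as this is the other half of the theorem's content, the bookkeeping needs to be done explicitly.
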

\begin{lem}[{\cite[Theorem 5.11 (b)]{Z}}]
\label{twist_inv}
Let $A$ be a connected graded $k$-algebra and 
$\theta$ be a twisting system of $A$. 
Then $A$ is a $3$-dimensional quadratic AS-regular algebra 
if and only if 
the twist $A^{\theta}$ of $A$ by $\theta$ 
is also a $3$-dimensional quadratic AS-regular algebra. 
\end{lem}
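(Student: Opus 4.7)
The plan is to verify that each condition in Definition \ref{AS-reg} (for $d=3$), together with being quadratic, is preserved under passing from $A$ to $A^{\theta}$, leveraging the graded Morita equivalence provided by Lemma \ref{twistingequi}.

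First, I would exploit that $A^{\theta}$ is defined on the same underlying graded $k$-vector space as $A$, so the two algebras have identical Hilbert series. This immediately gives ${\rm GKdim}\,A = {\rm GKdim}\,A^{\theta}$ and shows $A^{\theta}$ is connected graded iff $A$ is. Moreover, if $A = T(V)/(R)$ with $V = A_{1}$ and $R \subset V^{\otimes 2}$, one checks that $A^{\theta}$ admits a presentation $T(V)/(R^{\theta})$ where $R^{\theta}$ is obtained from $R$ by applying the $\theta_{1}$-twist to the second tensor factor, so $\dim_{k} R^{\theta} = \dim_{k} R$. Hence ``quadratic'' and ``three generators'' transfer between $A$ and $A^{\theta}$.

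Second, Lemma \ref{twistingequi} supplies an equivalence $F\colon {\rm GrMod}\,A \to {\rm GrMod}\,A^{\theta}$. Concretely, $F(M)=M^{\theta}$ is the module twist whose underlying graded $k$-vector space is $M$ and whose new action is $m \ast a := m\,\theta_{\deg m}(a)$. Since global dimension is a categorical invariant, ${\rm gldim}\,A = {\rm gldim}\,A^{\theta}$. Moreover $F(k_{A}) \cong k_{A^{\theta}}$ (both are the unique graded simple module concentrated in degree $0$) and $F(A_{A}) \cong (A^{\theta})_{A^{\theta}}$, so
\[
{\rm Ext}^{i}_{A}(k, A) \;\cong\; {\rm Ext}^{i}_{A^{\theta}}\bigl(F(k),\,F(A)\bigr) \;\cong\; {\rm Ext}^{i}_{A^{\theta}}(k, A^{\theta})
\]
as graded $k$-vector spaces, which transports the Gorenstein condition between $A$ and $A^{\theta}$.

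The main obstacle lies in the second step: one must verify carefully that the module-twist $M \mapsto M^{\theta}$ is indeed a category equivalence identifying the regular module $A_{A}$ with $(A^{\theta})_{A^{\theta}}$, so that Ext really does transport. The key combinatorial input is the twisting-system identity $\theta_{n}(a\theta_{l}(b)) = \theta_{n}(a)\theta_{n+l}(b)$, which ensures the new action is associative with respect to the new product on $A^{\theta}$. An inverse functor is then supplied by twisting by the inverse system $\{\theta_{i}^{-1}\}_{i\in\mathbb{N}}$, once one checks that this is itself a twisting system of $A^{\theta}$. Combining these three observations yields the biconditional.
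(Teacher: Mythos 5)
The paper does not prove this lemma at all: it is quoted verbatim from Zhang \cite[Theorem 5.11(b)]{Z}, so there is no in-paper argument to compare against. Your proposal is, in outline, a correct reconstruction of Zhang's proof, and the overall strategy (preserve Hilbert series and connectedness by inspection; transport ${\rm gldim}$ and the Gorenstein condition through the explicit equivalence $M\mapsto M^{\theta}$, using $F(k_{A})\cong k_{A^{\theta}}$ and $F(A_{A})\cong (A^{\theta})_{A^{\theta}}$) is the right one. Two points deserve more care than your sketch gives them. First, for ``quadratic'' to transfer it is not enough that the degree-$2$ relation spaces correspond under ${\rm id}\otimes\theta_{1}$; you must also show that the relation ideal of $A^{\theta}$ is generated in degree $2$, which requires identifying the kernel of $V^{\otimes n}\to A^{\theta}_{n}$, $v_{1}\otimes\cdots\otimes v_{n}\mapsto v_{1}\theta_{1}(v_{2})\cdots\theta_{n-1}(v_{n})$, with the twist of the kernel for $A$ in every degree $n$; this is exactly where the twisting-system identity does its work (it is the content of \cite[Proposition 5.8]{Z}). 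A cheaper route, available here, is to invoke \cite[Theorem 1.5(i)]{AS}: once $A^{\theta}$ is known to be $3$-dimensional AS-regular and generated in degree $1$, the preserved Hilbert series forces it to be quadratic rather than cubic. Second, the twisting functor does not commute naively with the shift functor, so in transporting ${\rm Ext}^{i}_{A}(k,A)=\bigoplus_{d}{\rm Ext}^{i}_{{\rm GrMod}A}(k(-d),A)$ you should apply the equivalence with the shift placed on $k$ (where $F(k(-d))\cong k(-d)$ is forced, being the one-dimensional module concentrated in degree $d$ killed by the augmentation ideal) rather than on $A$. With those two repairs your argument is complete and matches the cited source.
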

\subsection{Derivation-quotient algebras}
Now, we recall the definitions of superpotentials, 
twisted superpotentials and derivation-quotient algebras from 
\cite{BSW} and \cite{MS1}. 
Also, we recall the definition of a {\it twist of a superpotential} 
due to \cite{MS1} (see Definition \ref{MStwist}). 

Fix a basis $\{ x_{1},x_{2},x_{3} \}$ for $V$.
For $w\in V^{\otimes 3}$, 
there exist unique $w_{i}\in V^{\otimes 2}$ such that 
$w=\sum_{i=1}^{3}x_{i}\otimes w_{i}$. 
Then the {\it partial derivative} of $w$ with respect to $x_{i}$ 
($i=1,2,3$) is
$
\partial_{x_{i}}(w):=w_{i},
$ 
and the {\it derivation-quotient algebra} of $w$ is
$$
\mathcal{D}(w):=T(V)/(\partial_{x_{1}}w,\partial_{x_{2}}w,\partial_{x_{3}}w).
$$
Note that we call an element $w \in V^{\otimes 3}$ a {\it potential} 
in this paper.
We define the $k$-linear map $\varphi$: 
$V^{\otimes 3}\longrightarrow V^{\otimes 3}$ by 
$
\varphi(v_{1}\otimes v_{2}\otimes v_{3}):=v_{3}\otimes v_{1}\otimes v_{2}
$. 
We write ${\rm GL}(V)$ for the general linear group of $V$. 
\begin{defin}[{\cite[Introduction]{BSW}, \cite[Definition 2.5]{MS1}}]
\label{def_sp}
	Let $w \in V^{\otimes 3}$ be a potential.
\begin{enumerate}
\item If $\varphi(w)=w$,
           then $w$ is called a {\it superpotential}. 
\item
If there exists $\theta\in {\rm GL}(V)$ such that 
           $$
           (\theta\otimes {\rm id}\otimes {\rm id})\varphi(w)=w,
           $$
           then $w$ is called a {\it twisted superpotential}.
\end{enumerate}
\end{defin}
\begin{rem}
By Dubois-Violette \cite{D} and Bocklandt-Schedler-Wemyss \cite{BSW},
every $3$-dimensional quadratic AS-regular algebra $A$ 
is isomorphic to a derivation-quotient algebra $\mathcal{D}(w)$ 
of a twisted superpotential $w$ (see \cite[Theorem 5]{D} and \cite[Theorem 6.8]{BSW}), 
and by Mori-Smith \cite{MS1}, 
such $w$ is unique up to non-zero scalar multiples 
(see \cite[Proposition 2.12]{MS1}).
\end{rem}
\begin{defin}[{\cite[page 390]{MS1}}]
\label{MStwist}
For a superpotential $w\in V^{\otimes 3}$ and 
$\theta \in {\rm GL}(V)$,  
$$
w^{\theta}:=(\theta^{2}\otimes \theta\otimes {\rm id})(w) 
$$is called a {\it Mori-Smith twist} ({\it MS twist}) of $w$ by $\theta$. 
\end{defin}

For a potential $w \in V^{\otimes 3}$, 
we set 
$$
{\rm Aut}\,(w):=
\{\theta \in {\rm GL}(V) \mid  
(\theta^{\otimes 3})(w)
=\lambda w,\,\exists \lambda \in k \setminus \{0\}\}. 
$$
For any $w \in V^{\otimes 3}$, it follows from \cite[Lemma 3.1]{MS1} that
${\rm Aut}\,(w) \subset {\rm Aut}\,\mathcal{D}(w)$.
\begin{lem}
	[{\cite[Proposition 5.2]{MS1}}]
	\label{twist}
	For a superpotential $w\in V^{\otimes 3}$ and 
	$\theta \in {\rm Aut}\,(w)$,
	we have that $\mathcal{D}(w^{\theta}) \cong \mathcal{D}(w)^{\theta}$.
\end{lem}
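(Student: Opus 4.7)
The plan is to prove the isomorphism by showing that $\mathcal{D}(w^{\theta})$ and $\mathcal{D}(w)^{\theta}$ admit presentations as quotients of the tensor algebra $T(V)$ by the same subspace of quadratic relations. Write $w = \sum_{i=1}^{3} x_{i} \otimes w_{i}$ with $w_{i} = \partial_{x_{i}}(w) \in V^{\otimes 2}$, so that the defining relations of $\mathcal{D}(w)$ span $R = \langle w_{1},w_{2},w_{3}\rangle \subset V^{\otimes 2}$. I expect the argument to split into three steps: identify the relations of $\mathcal{D}(w^{\theta})$, identify the relations of $\mathcal{D}(w)^{\theta}$, and then check that both subspaces of $V^{\otimes 2}$ coincide.

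For the first step, since $w^{\theta} = (\theta^{2} \otimes \theta \otimes \mathrm{id})(w) = \sum_{i} \theta^{2}(x_{i}) \otimes (\theta \otimes \mathrm{id})(w_{i})$, I would expand $\theta^{2}(x_{i})$ in the basis $\{x_{p}\}$ and collect terms of the form $x_{p} \otimes (-)$, which gives $\partial_{x_{p}}(w^{\theta}) = (\theta \otimes \mathrm{id})\bigl(\sum_{i} c_{pi}\, w_{i}\bigr)$ for $(c_{pi})$ the invertible matrix of $\theta^{2}$. Hence the span of the partials of $w^{\theta}$ equals $(\theta \otimes \mathrm{id})(R)$. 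For the second step, from the definition of Zhang's twisted multiplication $v \ast v' = v\,\theta(v')$ on $V = \mathcal{D}(w)_{1}$, the kernel of the natural surjection $V \otimes V \twoheadrightarrow (\mathcal{D}(w)^{\theta})_{2}$ is $(\mathrm{id} \otimes \theta^{-1})(R)$, and this is the space of quadratic relations of $\mathcal{D}(w)^{\theta}$.

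The heart of the argument is to show $(\theta \otimes \mathrm{id})(R) = (\mathrm{id} \otimes \theta^{-1})(R)$, equivalently $\theta^{\otimes 2}(R) = R$. For this I would use $\theta \in \mathrm{Aut}(w)$: writing $\theta^{\otimes 3}(w) = \lambda w$ out via $\theta(x_{i}) = \sum_{j} \theta_{ji}\, x_{j}$ and matching coefficients of $x_{j} \otimes (-)$ yields $\sum_{i} \theta_{ji}\, \theta^{\otimes 2}(w_{i}) = \lambda\, w_{j}$ for each $j$; invertibility of $(\theta_{ji})$ then gives $\theta^{\otimes 2}(R) \subseteq R$, hence equality. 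This is essentially the containment $\mathrm{Aut}(w) \subseteq \mathrm{Aut}\,\mathcal{D}(w)$ quoted just above the lemma. Combining the three steps, both $\mathcal{D}(w^{\theta})$ and $\mathcal{D}(w)^{\theta}$ arise as $T(V)$ modulo the ideal generated by the same subspace of $V^{\otimes 2}$, which gives the desired isomorphism of graded $k$-algebras.

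The main obstacle I anticipate is notational rather than conceptual: one must carefully reconcile the MS twist convention, which applies $\theta^{2},\theta,\mathrm{id}$ to the three tensor slots of a potential, with Zhang's convention, which inserts $\theta^{i-1}$ in the $i$-th slot of an iterated product, and track how the quadratic subspace $R$ transforms under each. The superpotential condition $\varphi(w) = w$ itself does not enter the argument directly; it is used only to make the MS twist meaningful in the sense of Definition \ref{MStwist}.
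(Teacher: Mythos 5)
Your argument is correct: the paper itself gives no proof of this lemma (it is quoted from \cite[Proposition 5.2]{MS1}), and your computation --- identifying the relation space of $\mathcal{D}(w^{\theta})$ as $(\theta\otimes\mathrm{id})(R)$, that of the Zhang twist $\mathcal{D}(w)^{\theta}$ as $(\mathrm{id}\otimes\theta^{-1})(R)$, and using $\theta^{\otimes 3}(w)=\lambda w$ to get $\theta^{\otimes 2}(R)=R$ --- is exactly the standard proof given in the cited source. The only step you use silently is that the Zhang twist of a quadratic algebra is again quadratic (so that $\mathcal{D}(w)^{\theta}$ really is $T(V)$ modulo the degree-two kernel you computed), which is standard and follows from $\theta^{\otimes 2}(R)=R$ together with the identification of $T(V)^{\theta}$ with $T(V)$.
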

\begin{lem}
\label{lem-1}
If $w \in V^{\otimes 3}$ is a superpotential and 
$
\theta \in
{\rm Aut}\,(w)
$, 
then the MS twist $w^{\theta}$ of a superpotential $w$ by $\theta$ 
is a twisted superpotential. 
\end{lem}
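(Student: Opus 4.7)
The plan is to exhibit an explicit $\psi \in \operatorname{GL}(V)$ satisfying the twisted superpotential condition $(\psi \otimes \operatorname{id} \otimes \operatorname{id})\varphi(w^\theta) = w^\theta$, built from $\theta$ itself and the scalar $\lambda$ with $(\theta\otimes\theta\otimes\theta)(w)=\lambda w$. The proof is a straightforward computation once we track how $\varphi$ interacts with tensor-factorwise operators.

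First I would compute $\varphi(w^\theta)$. Writing $w = \sum_i a_i \otimes b_i \otimes c_i$, the definition of $w^\theta$ gives $w^\theta = \sum_i \theta^2(a_i)\otimes\theta(b_i)\otimes c_i$, and applying $\varphi$ yields $\sum_i c_i \otimes \theta^2(a_i)\otimes\theta(b_i)$. Using the superpotential identity $\varphi(w)=w$, which rewrites $\sum_i c_i\otimes a_i\otimes b_i = \sum_i a_i\otimes b_i\otimes c_i = w$, we can pull out the diagonal action to conclude
\[
\varphi(w^\theta) = (\operatorname{id}\otimes \theta^2 \otimes \theta)(w).
\]
Hence $(\psi \otimes \operatorname{id}\otimes\operatorname{id})\varphi(w^\theta) = (\psi \otimes \theta^2 \otimes \theta)(w)$, and we need this to equal $w^\theta = (\theta^2\otimes\theta\otimes\operatorname{id})(w)$.

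The second step uses $\theta \in \operatorname{Aut}(w)$. Writing $(\theta\otimes\theta\otimes\theta)(w) = \lambda w$ with $\lambda \in k\setminus\{0\}$, I rewrite
\[
(\theta^2\otimes\theta\otimes\operatorname{id})(w)
= \lambda^{-1}(\theta^2\otimes\theta\otimes\operatorname{id})(\theta\otimes\theta\otimes\theta)(w)
= \lambda^{-1}(\theta^3\otimes\theta^2\otimes\theta)(w).
\]
Matching slots, the choice $\psi := \lambda^{-1}\theta^3 \in \operatorname{GL}(V)$ now makes the desired equality automatic:
\[
(\psi\otimes\theta^2\otimes\theta)(w) = \lambda^{-1}(\theta^3\otimes\theta^2\otimes\theta)(w) = (\theta^2\otimes\theta\otimes\operatorname{id})(w) = w^\theta.
\]
Thus $w^\theta$ satisfies Definition~\ref{def_sp}(2) with the witness $\psi$, so $w^\theta$ is a twisted superpotential.

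There is no real obstacle here: the only subtle point is remembering to invoke both hypotheses, namely $\varphi(w)=w$ (needed to simplify $\varphi(w^\theta)$ into the form $(\operatorname{id}\otimes\theta^2\otimes\theta)(w)$) and $\theta\in\operatorname{Aut}(w)$ (needed to absorb the mismatch between $(\theta^2,\theta,\operatorname{id})$ and $(\psi,\theta^2,\theta)$ into a single factor in the first slot). Everything else is bookkeeping on $V^{\otimes 3}$.
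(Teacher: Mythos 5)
Your proof is correct and follows essentially the same route as the paper: both establish $\varphi(w^\theta)=(\operatorname{id}\otimes\theta^2\otimes\theta)(w)$ from the superpotential property and then take the witness $\lambda^{-1}\theta^3$ (your $\psi$ is exactly the paper's $\theta'$), absorbing the scalar via $(\theta^{\otimes 3})(w)=\lambda w$. No gaps.
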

\begin{proof}
Let $w \in V^{\otimes 3}$ be a superpotential 
and $\theta \in {\rm Aut}\,(w)$.
By definition, 
there exists $\lambda \in k \setminus \{0\}$ 
such that $(\theta^{\otimes 3})(w)=\lambda w$.
We set $\theta':=\lambda^{-1}\theta^{3} \in {\rm GL}(V)$.
Since $w$ is a superpotential,
\begin{align*}
(\theta' \otimes {\rm id} \otimes {\rm id})(\varphi(w^{\theta}))
			&=(\theta' \otimes {\rm id} \otimes {\rm id})({\rm id} \otimes \theta^{2} \otimes \theta)(\varphi(w)) \\
			&=\lambda^{-1}(\theta^{3} \otimes \theta^{2} \otimes \theta)(w) 
			=\lambda^{-1}(\theta^{2} \otimes \theta \otimes {\rm id})(\lambda w)
			=w^{\theta},
\end{align*}
so the MS twist $w^{\theta}$ is a twisted superpotential.
\end{proof}
\begin{rem}
If $\theta \in {\rm GL}(V)\setminus{\rm Aut}\,(w)$, 
then the MS twist $w^{\theta}$ of a superpotential $w\in V^{\otimes 3}$ 
by $\theta$ may not be a twisted superpotential. 
Indeed, let $w:=x^{3}\in V^{\otimes 3}$. 
Since $\varphi(w)=w$, we see that $w$ is a superpotential. 
Take 
$
\theta:=\left(
\begin{array}{ccc}
0 &1 & 0 \\
1 & 0& 0 \\
0 & 0& 1
\end{array}
\right) \in {\rm GL}_{3}(k)
$. 
Then 
$
w^{\theta}=(\theta^{2}\otimes \theta \otimes {\rm id})(w)=xyx. 
$
Since, for any $\theta' \in {\rm GL}(V)$, 
$
(\theta'\otimes {\rm id}\otimes {\rm id})(\varphi(w^{\theta}))
=\theta'(x)xy
\neq w^{\theta}
$, 
the MS twist $w^{\theta}$ is not a twisted superpotential. 
Note that 
we have $\theta \not\in {\rm Aut}\,(w)$ 
by 
$
(\theta^{\otimes 3})(w)=y^{3}\neq w
$. 
\end{rem}
\begin{defin}\label{de-pot}
Let $w \in V^{\otimes 3}$ be a potential.
\begin{enumerate}[{(1)}]
\item A potential $w$ is called {\it regular} 
          if the derivation-quotient algebra $\mathcal{D}(w)$ is
          a $3$-dimensional quadratic AS-regular algebra.
\item A potential $w$ is called {\it Calabi-Yau} 
          if the derivation-quotient algebra $\mathcal{D}(w)$ is
          a $3$-dimensional Calabi-Yau AS-regular algebra.
\end{enumerate}
\end{defin}
\begin{rem}
\label{Bockdland}
By Bockdlandt \cite{B}, 
every $3$-dimensional quadratic Calabi-Yau AS-regular algebra 
is isomorphic to a derivation-quotient algebra $\mathcal{D}(w)$ of 
a superpotential $w$ (\cite[Theorem 3.1]{B}). 
\end{rem}
\begin{lem}[{\cite[Corollary 4.5]{MS1}}]\label{CY-superpotential}
	Let $w \in V^{\otimes 3}$ be regular.
	Then $w$ is Calabi-Yau if and only if it is a superpotential.
\end{lem}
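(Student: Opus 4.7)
The approach is to combine Bocklandt's theorem (Remark \ref{Bockdland}) and the Mori-Smith uniqueness of the twisted superpotential (the remark after Definition \ref{def_sp}) for the ``only if'' direction, and to tie the twist $\theta$ from the twisted-superpotential relation to the Nakayama automorphism of the Koszul dual for the ``if'' direction, then apply Lemma \ref{thmRRZ}.

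For the ``only if'' direction, I would suppose $w$ is regular and Calabi-Yau, so $A := \mathcal{D}(w)$ is $3$-dimensional quadratic Calabi-Yau AS-regular. By Remark \ref{Bockdland}, there is a superpotential $w_0 \in V_0^{\otimes 3}$ with $A \cong \mathcal{D}(w_0)$. Let $\theta \colon V \to V_0$ be the degree-$1$ part of this graded isomorphism, and set $\tilde w_0 := (\theta^{-1})^{\otimes 3}(w_0) \in V^{\otimes 3}$. Because $\varphi$ commutes with $(\theta^{-1})^{\otimes 3}$, the cyclicity $\varphi(w_0) = w_0$ gives $\varphi(\tilde w_0) = \tilde w_0$, so $\tilde w_0$ is a superpotential; the transport also yields $\mathcal{D}(w) = \mathcal{D}(\tilde w_0)$. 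Since $w$ and $\tilde w_0$ are both twisted superpotentials for the same AS-regular algebra $A$, Mori-Smith uniqueness forces $w = \lambda \tilde w_0$ for some $\lambda \in k \setminus \{0\}$, and scalar multiples of superpotentials are superpotentials, so $w$ is a superpotential.

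For the ``if'' direction, suppose $w$ is a regular superpotential. Then $\varphi(w) = w$, so $w$ is trivially a twisted superpotential with twist $\mathrm{id}$. By Lemma \ref{thmRRZ}, it suffices to prove that the Nakayama automorphism $\nu_{A^{!}}$ of the Frobenius algebra $A^{!}$ is the identity, where $A := \mathcal{D}(w)$. The key input I would use is the standard correspondence for $3$-dimensional quadratic AS-regular algebras of the form $\mathcal{D}(w)$: the Frobenius pairing on $A^{!}$ (with $A^{!}_3 \cong k$) is encoded by $w$ via the nondegenerate map $V^{\otimes 2} \otimes V \to k$ it induces, and the unique $\theta \in \mathrm{GL}(V)$ satisfying $(\theta \otimes \mathrm{id} \otimes \mathrm{id})\varphi(w) = w$ coincides (up to taking transposes with respect to the dual basis $V^{*}$) with $\nu_{A^{!}}$. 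Since $w$ is a superpotential, $\theta = \mathrm{id}$, hence $\nu_{A^{!}} = \mathrm{id}$ and $A^{!}$ is symmetric, so $A$ is Calabi-Yau by Lemma \ref{thmRRZ}.

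The main obstacle is the identification of the twist $\theta$ with $\nu_{A^{!}}$ used in the ``if'' direction: this is not explicit in the excerpt and requires either a direct computation with dual bases of $V$ and $V^{*}$ and the Frobenius pairing on $A^{!}$ coming from $w$, or a reference to the relevant machinery from \cite{MS1}. A cleaner alternative that sidesteps this is to invoke the full iff version of Bocklandt's Theorem 3.1 (a regular superpotential yields a Calabi-Yau algebra), which makes the ``if'' direction immediate.
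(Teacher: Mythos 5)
First, note that the paper does not actually prove Lemma \ref{CY-superpotential}: it is imported verbatim as \cite[Corollary 4.5]{MS1}, and only its ``if'' direction (regular superpotential $\Rightarrow$ Calabi--Yau) is ever used, in Theorems \ref{SP} and \ref{Main2}. So there is no in-paper argument to compare with, and your proposal must stand on its own. Your ``if'' direction is a reasonable sketch: identifying the twist $\theta$ in $(\theta\otimes\mathrm{id}\otimes\mathrm{id})\varphi(w)=w$ with the Nakayama automorphism $\nu_{A^{!}}$ (up to transpose and a sign that is trivial for $d=3$) is precisely the Mori--Smith/Reyes--Rogalski--Zhang machinery, and deferring that identification to \cite{MS1} is no worse than what the paper itself does by citing the whole corollary.

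The ``only if'' direction, however, has a genuine gap. The Mori--Smith uniqueness you invoke is uniqueness \emph{among twisted superpotentials}: if $w$ and $w'$ are both twisted superpotentials with $\mathcal{D}(w)=\mathcal{D}(w')$ AS-regular, then $w'=\lambda w$. But ``regular'' in Definition \ref{de-pot}(1) only says that $\mathcal{D}(w)$ is AS-regular; it does not make $w$ a twisted superpotential, and the paper itself stresses that this is an extra property requiring verification (see the remark after Proposition \ref{TSP}, and Theorem \ref{thm-RTSP}). Your sentence ``$w$ and $\tilde w_0$ are both twisted superpotentials for the same algebra'' therefore assumes exactly what is missing. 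The gap is not cosmetic: with the paper's literal definitions the ``only if'' direction is false. For instance, $w'=x(yz-zy)+y(zx-xz)+2z(xy-yx)$ satisfies $\mathcal{D}(w')=k[x,y,z]$, so $w'$ is regular and Calabi--Yau in the sense of Definition \ref{de-pot}, yet $\varphi(w')\neq w'$; indeed its right partial derivatives $yz-2zy$, $2zx-xz$, $xy-yx$ do not all lie in the relation space, so $w'$ is not even a twisted superpotential. The statement one can actually prove --- and the form in which \cite[Corollary 4.5]{MS1} should be read --- takes $w$ to be a regular \emph{twisted} superpotential; under that hypothesis your transport-plus-uniqueness argument does go through, since $\varphi$ commutes with $(\theta^{-1})^{\otimes 3}$ and $\mathcal{D}(\tilde w_0)=\mathcal{D}(w)$ as you observe.
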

\begin{lem}
\label{lem-2}
If $w$ is a Calabi-Yau superpotential and $\theta \in {\rm Aut}\,(w)$, 
then the MS twist $w^{\theta}$ 
of a superpotential $w$ by $\theta$ is a regular twisted superpotential. 
\end{lem}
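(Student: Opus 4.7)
The plan is to decompose the claim into its two components and handle each by appealing to the results already stated in the excerpt. I need to show (a) that $w^{\theta}$ is a twisted superpotential, and (b) that $w^{\theta}$ is regular in the sense of Definition \ref{de-pot}, i.e.\ that $\mathcal{D}(w^{\theta})$ is a $3$-dimensional quadratic AS-regular algebra.

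For (a), this is immediate: the hypothesis $\theta \in \mathrm{Aut}(w)$ is exactly the hypothesis of Lemma \ref{lem-1}, which already concludes that the MS twist of a superpotential by an element of $\mathrm{Aut}(w)$ is a twisted superpotential. So there is nothing to add here beyond citing Lemma \ref{lem-1}.

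For (b), I would chain together Lemmas \ref{twist}, \ref{twist_inv}, and the Calabi--Yau hypothesis. First, by Lemma \ref{twist}, since $w$ is a superpotential and $\theta \in \mathrm{Aut}(w)$, there is an isomorphism $\mathcal{D}(w^{\theta}) \cong \mathcal{D}(w)^{\theta}$, where the right-hand side is the Zhang twist of $\mathcal{D}(w)$ by the twisting system $\{\theta^{i}\}_{i\in \mathbb{N}}$ induced by the graded algebra automorphism $\theta \in \mathrm{Aut}(w) \subset \mathrm{Aut}\,\mathcal{D}(w)$ (the inclusion coming from the remark after the definition of $\mathrm{Aut}(w)$). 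Next, since $w$ is Calabi--Yau, $\mathcal{D}(w)$ is a $3$-dimensional quadratic AS-regular algebra (indeed Calabi--Yau), so Lemma \ref{twist_inv} applies and yields that the Zhang twist $\mathcal{D}(w)^{\theta}$ is again a $3$-dimensional quadratic AS-regular algebra. Combining these two steps, $\mathcal{D}(w^{\theta})$ is a $3$-dimensional quadratic AS-regular algebra, so $w^{\theta}$ is regular.

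There is really no obstacle here: the lemma is essentially a recombination of Lemmas \ref{lem-1}, \ref{twist}, and \ref{twist_inv}, and all the needed machinery has already been developed. The only small check is that the graded automorphism $\theta$ of $V$ induces a graded automorphism of $\mathcal{D}(w)$ so that Lemma \ref{twist_inv} is applicable, which is precisely \cite[Lemma 3.1]{MS1} recalled in the excerpt via the inclusion $\mathrm{Aut}(w) \subset \mathrm{Aut}\,\mathcal{D}(w)$.
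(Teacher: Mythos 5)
Your proof is correct and follows essentially the same route as the paper: Lemma \ref{lem-1} gives the twisted superpotential property, and then Lemma \ref{twist} combined with Lemma \ref{twist_inv} (applied to the Calabi--Yau AS-regular algebra $\mathcal{D}(w)$) gives regularity. The extra remark that $\mathrm{Aut}(w)\subset \mathrm{Aut}\,\mathcal{D}(w)$ makes the application of Lemma \ref{twist_inv} explicit, which the paper leaves implicit.
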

\begin{proof}
Let $w \in V^{\otimes 3}$ be a Calabi-Yau superpotential 
and $\theta\in {\rm Aut}\,(w)$.
By Lemma \ref{lem-1}, 
the MS twist $w^{\theta}$ of $w$ by $\theta$ is a twisted superpotential, 
so it is sufficient to show that $w^{\theta}$ is regular.
Since $w$ is Calabi-Yau, $\mathcal{D}(w)$ is Calabi-Yau AS-regular, and
since $\theta \in {\rm Aut}\,(w)$, by Lemma \ref{twist}, we have that
$\mathcal{D}(w^{\theta}) \cong \mathcal{D}(w)^{\theta}$.
Since it holds from Lemma \ref{twist_inv}
that AS-regularity is presented by twisting, 
$\mathcal{D}(w^{\theta})$ is AS-regular, 
that is, the MS twist $w^{\theta}$ is regular.
\end{proof}
\begin{exa}
\label{ex_S1}
We set 
$
w:=(xyz+yzx+zxy)-(zyx+yxz+xzy)\in V^{\otimes 3}
$. 
Then, we see that 
$\varphi(w)=w
$. 
So, $w$ is a superpotential. 
The derivation-quotient algebra of $w$ is
\begin{align*}
\mathcal{D}(w)
&=k\langle x,y,z\rangle/(\partial_{x}w,\partial_{y}w,\partial_{z}w)\\
&=k\langle x,y,z\rangle/(yz-zy,zx-xz,xy-yx)=k[x,y,z]. 
\end{align*}
It is known that a polynomial ring $k[x,y,z]$ is Calabi-Yau AS-regular. 
So, by Definition \ref{de-pot} (2),
 $w$ is a Calabi-Yau superpotential. 
Take  
$
\theta:=\left(
\begin{array}{ccc}
\alpha &0 & 0 \\
0 & \beta& 0 \\
0 & 0& \gamma
\end{array}
\right)\in {\rm GL}_{3}(k)
$. 
Calculating the MS twist $w^{\theta}$ of the superpotential $w$ by $\theta$, 
\begin{align*}
w^{\theta}&=(\theta^{2}\otimes \theta\otimes {\rm id})(w)\\
&=(\alpha^{2}\beta xyz+\beta^{2}\gamma yzx+\alpha\gamma^{2}zxy)
-(\beta\gamma^{2}zyx+\alpha\beta^{2}yxz+\alpha^{2}\gamma xzy). 
\end{align*}
Therefore, the derivation-quotient algebra of $w^{\theta}$ 
is as follows: 
\begin{align*}
\mathcal{D}(w^{\theta})
&=k\langle x,y,z\rangle/(\partial_{x}w,\partial_{y}w,\partial_{z}w)\\
&=k\langle x,y,z\rangle/
(\alpha^{2}\beta yz-\alpha^{2}\gamma zy,
\beta^{2}\gamma zx-\alpha\beta^{2}xz,
\alpha\gamma^{2}xy-\beta\gamma^{2}yx)\\
&=k\langle x,y,z\rangle/
(\beta yz-\gamma zy,
\gamma zx-\alpha xz,
\alpha xy-\beta yx). 
\end{align*}
Since ${\rm Aut}\,(w)={\rm GL}\,(V)$, 
we see that $\theta \in {\rm Aut}\,(w)$. 
By Lemma \ref{lem-2}, $w^{\theta}$ is a regular twisted superpotential, 
so, $\mathcal{D}(w^{\theta})$ is an AS-regular algebra. 
\end{exa}
\subsection{Geometric algebras}
Let $(E,\mathcal{O}_{E})$ be a scheme
where $\mathcal{O}_{E}$ is the structure sheaf on $E$.
An {\it invertible sheaf} on $E$ is defined to be a
locally free $\mathcal{O}_{E}$-module of rank $1$.
For a quadratic algebra $A=T(V)/(R)$, 
we set 
$$
\mathcal{V}(R):=\{(p,q)\in \mathbb{P}(V^{\ast})\times \mathbb{P}(V^{\ast})
\mid f(p,q)=0 \text{ for all } f\in R\}. 
$$
Let $E \subset \mathbb{P}(V^{\ast})$ be a closed
$k$-subscheme and $\sigma$ an automorphism of $E$.
For the rest of the paper, we fix
\begin{enumerate}[(a)]
	\item $\pi: E \rightarrow \mathbb{P}(V^{\ast})$ is the embedding,
	\item $\mathcal{L}:=\pi^{\ast}(\mathcal{O}_{\mathbb{P}(V^{\ast})}(1))$.
\end{enumerate}
In this case, $\mathcal{L}$ becomes an invertible sheaf on $E$. 
The map
$$
\mu: {\rm H}^{0}(E,\mathcal{L}) \otimes {\rm H}^{0}(E,\mathcal{L})
\rightarrow
{\rm H}^{0}(E,\mathcal{L}) \otimes {\rm H}^{0}(E,\mathcal{L}^{\sigma})
\rightarrow
{\rm H}^{0}(E,\mathcal{L}\otimes_{\mathcal{O}_{E}} \mathcal{L}^{\sigma})
$$
of $k$-vector spaces 
is defined by
$v \otimes w \mapsto v \otimes w^{\sigma}$
where $\mathcal{L}^{\sigma}=\sigma^{\ast}\mathcal{L}$
and $w^{\sigma}=w \circ \sigma$. 

For a quadratic algebra, a {\it geometric algebra} was introduced by 
Mori \cite{Mo}. 
\begin{defin}[{\cite[Definition 4.3]{Mo}}]
	A quadratic algebra $A=T(V)/(R)$ is called {\it geometric} 
        if there is a pair $(E,\sigma)$
	where $E \subset \mathbb{P}(V^{\ast})$ is a closed $k$-subscheme, 
        and $\sigma$ is a $k$-automorphism of $E$ such that
	\begin{itemize}
		\item (G1): $\mathcal{V}(R)=\{
		(p,\sigma(p)) \in \mathbb{P}(V^{\ast}) \times \mathbb{P}(V^{\ast}) \mid p \in E
		\}$, and
		\item (G2): $R={\rm ker} \mu$ with the identification
		$$
		{\rm H}^{0}(E,\mathcal{L})
		={\rm H}^{0}(\mathbb{P}(V^{\ast}),\mathcal{O}_{\mathbb{P}(V^{\ast})}(1))
                =V\text{ as $k$-vector spaces.}
		$$
	\end{itemize}
	When $A$ satisfies the condition (G2), we write $A=\mathcal{A}(E,\sigma)$.
	
\end{defin}
Let $A=T(V)/(R)$ be a quadratic algebra.
If $A=\mathcal{A}(E,\sigma)$ is a geometric algebra, 
then $E$ is called the point scheme of $A$.
If $E$ is reduced,
then the condition (G2) is equivalent to the condition (G2'):
$R=\{
f \in V \otimes V \mid f|_{\mathcal{V}(R)}=0
\}$ (see \cite{Mo}).
\begin{thm}[{\cite[Theorem 3]{ATV1}}]
	\label{ATV1}
	Let $A$ be a quadratic algebra. 
	Then $A$ is a $3$-dimensional AS-regular algebra if and only if
	$A$ is isomorphic to a geometric algebra $\mathcal{A}(E,\sigma)$ 
	which satisfies one of the following conditions{\rm :}
	\begin{enumerate}[{\rm (1)}]
		\item $E=\mathbb{P}^{2}$ and $\sigma \in {\rm Aut}_{k}\,\mathbb{P}^{2}$. 
		\item $E$ is a cubic curve in $\mathbb{P}^{2}$ and $\sigma \in {\rm Aut}_{k}\,E$
		such that $\sigma^{\ast}\mathcal{L} \not\cong \mathcal{L}$ and
   	$$
   	(\sigma^{2})^{\ast}\mathcal{L} \otimes_{\mathcal{O}_{E}} \mathcal{L}
		\cong \sigma^{\ast}\mathcal{L} \otimes_{\mathcal{O}_{E}} \sigma^{\ast}\mathcal{L}.
		$$
	\end{enumerate}
\end{thm}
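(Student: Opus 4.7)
The plan is to leverage Theorem \ref{ATV1_thm1} together with a careful analysis of the bilinear matrix $M$ associated to the defining relations, where $\bm{f}=M\bm{x}$. For the forward implication, suppose $A=T(V)/(R)$ is a $3$-dimensional AS-regular algebra; by Theorem \ref{ATV1_thm1}, $A$ is standard and the $2\times 2$ minors of $M$ have empty common zero locus in $\mathbb{P}^{2}$. Unpacking $\mathcal{V}(R)$ shows that a pair $(p,q)$ lies in it exactly when the $3\times 3$ scalar matrix $M(p)$ annihilates $q$; the minors condition forces $M(p)$ to have rank at least $2$ everywhere, so the second coordinate $q$ is uniquely determined by $p$ whenever $\det M(p)=0$. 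This gives either $\det M\equiv 0$, in which case $E=\mathbb{P}^{2}$, or $\det M$ is a nonzero cubic form cutting out a cubic curve $E$ in $\mathbb{P}^{2}$. The projections of $\mathcal{V}(R)\subset \mathbb{P}(V^{*})\times \mathbb{P}(V^{*})$ onto each factor are isomorphisms onto $E$ (standardness supplies the symmetry), so composing the inverse of the first with the second produces $\sigma\in {\rm Aut}_{k}E$; checking (G1) together with (G2) then yields $A\cong \mathcal{A}(E,\sigma)$. The two conditions on $\mathcal{L}$ in case (2) fall out of matching the Hilbert series of $A$ with that of the twisted homogeneous coordinate ring $B(E,\mathcal{L},\sigma)$: $\sigma^{*}\mathcal{L}\not\cong \mathcal{L}$ (otherwise growth collapses) and the numerical identity $(\sigma^{2})^{*}\mathcal{L}\otimes_{\mathcal{O}_{E}}\mathcal{L}\cong \sigma^{*}\mathcal{L}\otimes_{\mathcal{O}_{E}}\sigma^{*}\mathcal{L}$, which is exactly the requirement that $A_{3}$ have dimension $10$.

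For the converse, given $(E,\sigma)$ satisfying (1) or (2), set $R:=\ker \mu$ and form $\mathcal{A}(E,\sigma)=T(V)/(R)$; I would verify AS-regularity via Theorem \ref{ATV1_thm1}. Standardness is automatic from the symmetry of $\mu$ under swapping the two factors (with $\sigma$ replaced by $\sigma^{-1}$). In case (1) the algebras are well-known twists of $k[x,y,z]$ and regularity follows directly by exhibiting a Koszul resolution of $k_{A}$. In case (2) one constructs an explicit length-$3$ free resolution of the trivial module $k_{A}$ using the natural surjection $\mathcal{A}(E,\sigma)\twoheadrightarrow B(E,\mathcal{L},\sigma)$: its kernel is generated by a single normal degree-$3$ element $g$ corresponding to $\det M$, multiplication by $g$ is injective, and the resulting resolution dualizes to verify the Gorenstein condition, while finite Gelfand-Kirillov dimension follows from the known linear growth of $B(E,\mathcal{L},\sigma)$.

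The main obstacle is case (2) of the converse: one must prove that the kernel of $\mathcal{A}(E,\sigma)\twoheadrightarrow B(E,\mathcal{L},\sigma)$ is generated by a single degree-$3$ normal element and that the resulting algebra is a Noetherian domain of the correct growth. The numerical condition $(\sigma^{2})^{*}\mathcal{L}\otimes_{\mathcal{O}_{E}} \mathcal{L}\cong \sigma^{*}\mathcal{L}\otimes_{\mathcal{O}_{E}} \sigma^{*}\mathcal{L}$ is precisely what guarantees the existence of such a normalizing element, by ensuring the appropriate cohomology groups on $E$ under the shift by $\sigma$ have the right dimensions so that the multiplication in $A$ matches $B$ in each degree up to a $1$-dimensional piece in degree $3$. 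This delicate cohomological analysis on the cubic $E$ is the heart of the Artin-Tate-Van den Bergh argument in \cite{ATV1}, and it is what allows one to convert the geometric data $(E,\sigma,\mathcal{L})$ into the purely homological properties appearing in Definition \ref{AS-reg}.
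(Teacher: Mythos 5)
The paper does not prove this statement: it is quoted from Artin--Tate--Van den Bergh \cite{ATV1} (restated in the language of geometric algebras from \cite{Mo}), so there is no in-paper argument to compare yours against. Your outline does follow the genuine ATV strategy --- multilinearize the relations, analyze the rank of $M(p)$ and the locus $\det M=0$ to extract the pair $(E,\sigma)$ in the forward direction, and in the converse pass through the twisted homogeneous coordinate ring $B(E,\mathcal{L},\sigma)\cong A/gA$ with $g$ a normal element of degree $3$ --- but as written it is a roadmap rather than a proof: every substantive step (that the minors condition forces a unique $q$ with $M(p)q=0$, that the two projections of $\mathcal{V}(R)$ land on the same cubic, that $\ker\bigl(A\twoheadrightarrow B(E,\mathcal{L},\sigma)\bigr)$ is generated by a single regular normal element, Noetherianity, the growth estimate, and the verification of the Gorenstein condition from the resulting resolution) is exactly the content of \cite{ATV1} and is named rather than carried out.

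One justification you offer is wrong. The condition $\sigma^{*}\mathcal{L}\not\cong\mathcal{L}$ in case (2) is not about growth collapsing. By Lemma \ref{Lemma1}, $\sigma^{*}\mathcal{L}\cong\mathcal{L}$ means precisely that $\sigma$ extends to an automorphism $\tilde{\sigma}$ of $\mathbb{P}^{2}$; in that case the restriction map $H^{0}(\mathbb{P}^{2},\mathcal{O}(2))\rightarrow H^{0}(E,\mathcal{L}\otimes_{\mathcal{O}_{E}}\mathcal{L}^{\sigma})$ is an isomorphism of $6$-dimensional spaces, so $\ker\mu$ for $(E,\sigma)$ equals $\ker\mu$ for $(\mathbb{P}^{2},\tilde{\sigma})$, the resulting algebra has point scheme $\mathbb{P}^{2}$ rather than $E$, condition (G1) fails for the pair $(E,\sigma)$, and the algebra belongs to case (1). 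The hypothesis is there to make the dichotomy between (1) and (2) exclusive and (G1) hold, not to control the Hilbert series. Your reading of the second condition on $\mathcal{L}$ as the degree-$3$ dimension count is the right heuristic, but the equivalence ``regular algebra $\Leftrightarrow$ regular pair'' that it encodes is precisely the hard part of \cite{ATV1} and would need to be reproduced, not merely cited, for this to stand as a proof.
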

The types of $(E,\sigma)$ of $3$-dimensional quadratic 
AS-regular algebras are defined in \cite{MU1} 
which are slightly modified from the original types 
defined in \cite{AS} and \cite{ATV1}. 
We extend the types defined in \cite{MU1} as follows 
(see \cite[Subsection 2.3]{IM}): 
\begin{description}
\item[{\rm (1) Type P}] 
      $E$ is $\mathbb{P}^{2}$, 
      and $\sigma \in \mathrm{Aut}_{k}\mathbb{P}^{2}=\mathrm{PGL}_{3}(k)$ 
      (Type  $\mathbb{P}^{2}$ is divided into 
      Type P$_{i}$ ($i=1,2,3$) in terms of the Jordan canonical form of $\sigma$). 
\item [{\rm (2-1) Type S$_{1}$}] 
      $E$ is a triangle, and $\sigma$ stabilizes each component. 
\item[{\rm (2-2) Type S$_{2}$}]
      $E$ is a triangle, and $\sigma$ interchanges two of its components. 
\item[{\rm (2-3) Type S$_{3}$}]
      $E$ is a triangle, and $\sigma$ circulates three components. 
\item[{\rm (3-1) Type S'$_{1}$}] 
      $E$ is a union of a line and a conic meeting at two points, 
      and $\sigma$ stabilizes each component 
      and two intersection points.
\item[{\rm (3-2) Type S'$_{2}$}]
      $E$ is a union of a line and a conic meeting at two points, 
      and $\sigma$ stabilizes each component 
      and interchanges two intersection points.
\item[{\rm (4-1) Type T$_{1}$}]
      $E$ is a union of three lines meeting at one point, 
      and $\sigma$ stabilizes each component.
\item[{\rm (4-2) Type T$_{2}$}]
      $E$ is a union of three lines meeting at one point, 
      and $\sigma$ interchanges two of its components. 
\item[{\rm (4-3) Type T$_{3}$}]
      $E$ is a union of three lines meeting at one point, 
      and $\sigma$ circulates three components.
\item[{\rm (5) Type T'}]
      $E$ is a union of a line and a conic meeting at one point, 
      and $\sigma$ stabilizes each component. 
\item[{\rm (6) Type CC}] $E$ is a cuspidal cubic curve. 
\item[{\rm (7) Type NC}]
          $E$ is a nodal cubic curve 
          (Type NC is divided into Type NC$_{i}$ $(i=1,2)$). 
\item[{\rm (8) Type WL}]$E$ is a union of a double line and a line 
          (Type WL is divided into Type WL$_{i}$ $(i=1,2,3)$). 
\item[{\rm (9) Type TL}] $E$ is a triple line 
         (Type TL is divided into Type TL$_{i}$ $(i=1,2,3,4)$). 
\item[{\rm (10) Type EC}] $E$ is an elliptic curve. 
\end{description}
\begin{rem}
\label{rem_IM1}
All possible defining relations of 
$3$-dimensional quadratic AS-regular algebras 
are listed in each type up to isomorphism 
from (1) through (9) in \cite[Theorem 3.1]{IM}, 
and (10) in \cite[Theorem 4.9]{IM}. 
\end{rem}
\section{Classifications of twisted superpotentials}
In this section, 
we will give complete lists of superpotentials 
and twisted superpotentials 
whose derivation-quotient algebras are 
$3$-dimensional quadratic AS-regular algebras except for Type EC, 
by using the following three steps:
\begin{description}
\item[Step I] (Proposition \ref{TSP}) Find the candidates of regular twisted superpotentials 
      corresponding to defining relations listed in \cite[Theorem 3.1]{IM}. 
\item[Step II] (Theorem \ref{SP}) Find all superpotentials among the above candidates 
      and show that they are Calabi-Yau superpotentials. 
\item[Step III] (Theorem \ref{prop-3}, Theorem \ref{thm-RTSP}) 
      Show that all above candidates can be 
      written as MS twists of Calabi-Yau superpotentials 
      and that they are in fact regular twisted superpotentials. 
\end{description}
As a byproduct, we will prove that, 
for any $3$-dimensional quadratic AS-regular algebra $A$ except for Type EC, 
there exist a Calabi-Yau AS-regular algebra $S$ 
and $\theta \in {\rm Aut}\,S$ 
such that $A$ is isomorphic to  $S^{\theta}$ as graded $k$-algebras. 
This result is needed to prove our main result Theorem \ref{conj}.  
\begin{prop}
\label{TSP}
Every $3$-dimensional quadratic AS-regular algebra except for Type EC 
is isomorphic to $\mathcal{D}(w)$ of a potential $w$ in 
Table $1$. 
\end{prop}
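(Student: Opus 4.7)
The plan is to argue as follows. By Artin--Tate--Van den Bergh, any $3$-dimensional quadratic AS-regular algebra $A$ is a geometric algebra $\mathcal{A}(E, \sigma)$ with $E$ either $\mathbb{P}^{2}$ or a cubic curve in $\mathbb{P}^{2}$; under the assumption that $E$ is not an elliptic curve, the classification of \cite[Theorem 3.1]{IM} enumerates all possible triples of defining relations of $A$, type by type (Types P$_{i}$, S$_{j}$, S$'_{j}$, T$_{k}$, T$'$, CC, NC$_{\ell}$, WL$_{m}$, TL$_{n}$). On the other hand, by the Dubois--Violette/BSW theorem recalled in the Remark after Definition \ref{def_sp}, $A$ is isomorphic to $\mathcal{D}(w)$ for some twisted superpotential $w \in V^{\otimes 3}$, unique up to a nonzero scalar. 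Thus the content of the proposition is merely to produce such a $w$ explicitly for each entry of \cite[Theorem 3.1]{IM}, and to record the resulting pair $(A,w)$ in Table $1$.

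The construction of $w$ is mechanical. Writing the defining relations in the form $\bm{f} = M\bm{x}$ as in Theorem \ref{ATV1_thm1}, and expanding $f_{i} = \sum_{j,k} c^{(i)}_{jk} x_{j} x_{k}$, I set
\[
w := \sum_{i=1}^{3} x_{i} \otimes f_{i} = \sum_{i,j,k} c^{(i)}_{jk}\, x_{i} \otimes x_{j} \otimes x_{k} \in V^{\otimes 3}.
\]
By construction $\partial_{x_{i}}(w) = f_{i}$, so $\mathcal{D}(w) = T(V)/(f_{1}, f_{2}, f_{3})$ is precisely the algebra given in \cite[Theorem 3.1]{IM}. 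Consequently the $w$ produced by this recipe is, up to scalar and by the uniqueness in the DV/BSW theorem, already forced to coincide with the twisted superpotential attached to $A$; one may therefore collect all these explicit expressions into Table $1$.

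The main obstacle is organizational rather than conceptual: the recipe must be executed for each of the twenty-odd entries of \cite[Theorem 3.1]{IM}, and among several equivalent presentations of $w$ one must select a normalisation in which the latent cyclic symmetry of the form $(\theta \otimes \mathrm{id} \otimes \mathrm{id})\varphi(w) = w$ is visible. Choosing the right representative for each type is what will enable the subsequent steps (Theorems \ref{SP}, \ref{prop-3}, and \ref{thm-RTSP}) to verify---by inspection of the cyclic shift $\varphi(w)$---that each $w$ listed in Table $1$ is a twisted superpotential and in fact realises a Calabi-Yau superpotential up to an MS twist; the cases to watch most carefully are those with extra scalar parameters (e.g.\ sub-types of NC, WL and TL) where a careless scaling of individual $f_{i}$ would destroy the symmetry required in the next stage.
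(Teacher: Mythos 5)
Your overall framing---reduce to the list in \cite[Theorem 3.1]{IM} and then, type by type, exhibit a potential whose partial derivatives generate the relation ideal---matches the paper's. But the central justification you give is wrong, and it is exactly the mistake the paper warns against in the remark immediately following Proposition \ref{TSP}. You claim that the naive potential $w=\sum_i x_i\otimes f_i$ is ``forced by the uniqueness in the DV/BSW theorem to coincide with the twisted superpotential attached to $A$.'' The Mori--Smith uniqueness is uniqueness among \emph{twisted superpotentials} $w$ with $\mathcal{D}(w)\cong A$; it says nothing about arbitrary potentials with the same derivation-quotient algebra. The assignment $w\mapsto\mathcal{D}(w)$ is far from injective: replacing $(f_1,f_2,f_3)$ by any other basis of $R$ changes $w$ but not $\mathcal{D}(w)$, and for a generic choice of basis the resulting $w$ is \emph{not} a twisted superpotential---the paper states explicitly that $w=xf_1+yf_2+zf_3$ is ``in many cases'' not one. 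So your recipe produces \emph{a} potential with the correct derivation-quotient algebra, but generally not the potential printed in Table 1, and no uniqueness principle upgrades it to the Table 1 entry.

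What the proposition actually requires---and what the paper does---is to take the specific $w$ listed in Table 1 and verify that its partial derivatives span the same subspace of $V^{\otimes 2}$ as the relations $f_1,f_2,f_3$ of \cite[Theorem 3.1]{IM}. This is a concrete linear-algebra check in each type; for instance, for Type T$_2$ one finds $\partial_z w=f_1$, $\partial_x w=f_1-f_2$, $\partial_y w=-\gamma f_1-f_3$, whence $\mathcal{D}(w)=k\langle x,y,z\rangle/(f_1,f_2,f_3)$. Note also that at this stage one does not yet know that the Table 1 potentials are twisted superpotentials (that is Theorem \ref{thm-RTSP}), nor even that every entry of \cite[Theorem 3.1]{IM} is AS-regular (see Remark \ref{rem_reg}), so any argument presupposing either fact is circular. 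Your closing paragraph does acknowledge that a ``normalisation'' must be chosen, but producing and verifying that choice is the entire content of the proposition, and the uniqueness argument you offer in its place does not work.
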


\noindent
{\small
\begin{tabular}{|p{0.6cm}|@{}p{2.9cm}|p{1.2cm}|p{3.2cm}|p{2.9cm}|}
\multicolumn{5}{c}{Table 1}\\[5pt]
\hline
& \ \ \ potential $w$ 
           & Cond. 
           & $\partial_{x}w$, $\partial_{y}w$, $\partial_{z}w$
           & $\nu_{A^{!}}$
           \\ \hline\hline
$\rm{P}_1$ & $\begin{array}{l}
             \alpha^2\beta xyz+\beta^2\gamma yzx \\ +\gamma^2\alpha zxy-\alpha^2\gamma xzy
             \\ -\gamma^2\beta zyx-\beta^2\alpha yxz
\end{array}$
           & $\alpha\beta\gamma\neq 0$
           & $\left\{
              \begin{array}{ll}
               \alpha^2\beta yz-\alpha^2\gamma zy, \\
               \beta^2\gamma zx-\beta^2\alpha xz, \\
               \gamma^2\alpha xy-\gamma^2\beta yx
              \end{array}
             \right.
             $
           & \rule{0pt}{40pt}$
             \left(
              \begin{array}{ccc}
              \dfrac{\alpha^{2}}{\beta\gamma} & 0& 0 \\
              0 & \dfrac{\beta^{2}}{\alpha\gamma}& 0 \\
              0 & 0& \dfrac{\gamma^{2}}{\alpha\beta}
             \end{array}
             \right)
             $ 
           \\[32pt] \hline
$\rm{P}_2$ & $\begin{array}{l}
             xyz+\alpha yzx+ \! \alpha^{2}zxy
             \\ -\alpha xzy-\alpha^{2}zyx \\ -yxz
             +y^{2}z \! - \! 2\alpha yzy \\ +\alpha^{2}zy^{2}
\end{array}$
           & $\alpha\neq 0$
           & $\left\{ \!\! 
              \begin{array}{ll}
                yz-\alpha zy, \\
                yz-2\alpha zy+\alpha zx \\ \hfill -xz,\  \\
                \alpha^{2}y^{2}+\alpha^{2}xy-\alpha^{2}yx
              \end{array}
             \right.
             $
             & \rule{0pt}{30pt}$
             \left(
              \begin{array}{ccc}
              \dfrac{1}{\alpha} & \dfrac{3}{\alpha}& 0 \\
              0 & \dfrac{1}{\alpha}& 0 \\
              0 & 0& \alpha^{2}
             \end{array}
             \right)
             $
           \\[20pt] \hline

$\rm{P}_3$ & $\begin{array}{l}
             -xyz-yzx-zxy \\ +xzy+zyx+yxz \\
             -z^{2}x+2zxz-xz^{2} \\ -zy^{2}+zyz+z^{2}y \\ 
             -y^{2}z+2yzy-2yz^{2} \\ -z^{3}
\end{array}$
           & nothing
           & \rule{0pt}{40pt}$\left\{ \!\! 
              \begin{array}{ll}
              zy-yz-z^{2}, \\
              xz-yz-2z^{2}\\
              \hfill -zx+2zy,\  \\
              -xy+yx-y^{2}-zx\\
              \hfill +2xz+yz+zy \\ \hfill -z^{2} 
              \end{array}
             \right.
             $
           & $
             \left(
              \begin{array}{ccc}
              1 & 3& 3 \\
              0 & 1& 3 \\
              0 & 0& 1
             \end{array}
             \right)
             $
            \\ \hline

           \end{tabular}
}

\noindent
{\small
\begin{tabular}{|p{0.6cm}|@{}p{2.9cm}|p{1.2cm}|p{3.2cm}|p{2.9cm}|}\hline
%
$\rm{S}_1$ & $\begin{array}{l}
             \beta xyz+\gamma yzx \! + \! \alpha zxy \\ -\alpha\beta xzy
             -\alpha\gamma zyx \\ -\beta\gamma yxz
\end{array}$
           & $\alpha\beta\gamma\neq 0,1$
           & $\left\{
              \begin{array}{ll}
               \beta yz-\alpha\beta zy, \\
               \gamma zx-\beta\gamma xz, \\
               \alpha xy-\alpha\gamma yx
              \end{array}
             \right.
             $
           & \rule{0pt}{35pt}$
             \left(
              \begin{array}{ccc}
              \dfrac{\beta}{\gamma} & 0& 0 \\
              0 & \dfrac{\gamma}{\alpha}& 0 \\
              0 & 0& \dfrac{\alpha}{\beta}
             \end{array}
             \right)
             $
           \\[25pt] \hline
$\rm{S}_2$ & $\begin{array}{l}
             -yzx-xzy+ \! \dfrac{1}{\beta}x^{2}z \\ +\dfrac{1}{\alpha}zx^{2}+\alpha y^{2}z
             \\ +\beta zy^{2}
\end{array}$
           & $\alpha\beta\neq 0$
           & $\left\{
              \begin{array}{ll}
              \dfrac{1}{\beta}xz-zy, \\
              \alpha yz-zx, \\
              \dfrac{1}{\alpha}x^{2}+\beta y^{2}
              \end{array}
             \right.
             $
           & \rule{0pt}{32pt}$
             \left(
              \begin{array}{ccc}
              0& -\alpha& 0 \\
              -\dfrac{1}{\beta} & 0& 0 \\
              0 & 0& \dfrac{\beta}{\alpha}
             \end{array}
             \right)
             $
            \\[25pt]
            \hline
$\rm{S}_3$ & $\begin{array}{l}
             -xzy-zyx-yxz
             \\ +\beta x^{3}+\gamma y^{3}+\alpha z^{3}
\end{array}$
           & $\alpha\beta\gamma\neq 0,1$
           & $\left\{
              \begin{array}{ll}
               \beta x^{2}-zy, \\
               \gamma y^{2}-xz, \\
               \alpha z^{2}-yx
              \end{array}
             \right.
             $
           & \rule{0pt}{22pt}$
             \left(
              \begin{array}{ccc}
              1& 0& 0 \\
              0& 1& 0 \\
              0 & 0& 1
             \end{array}
             \right)
             $
           \\[15pt] \hline
\rm{S'}$_1$ & $\begin{array}{l}
               \beta xyz+ \! \beta yzx \! + \! \alpha zxy
               \\ -\alpha\beta xzy-\alpha\beta zyx \\ -\beta^{2}yxz+\beta x^{3}
\end{array}$
            & $\alpha\beta^{2}\neq 0,1$
           & $\left\{
              \begin{array}{ll}
              \beta x^{2}+\beta yz-\alpha\beta zy, \\
              \beta zx-\beta^{2}xz, \\
              \alpha xy-\alpha\beta yx
              \end{array}
             \right.
             $
            & \rule{0pt}{30pt}$
              \left(
              \begin{array}{ccc}
              1& 0& 0 \\
              0& \dfrac{\beta}{\alpha}& 0 \\
              0 & 0& \dfrac{\alpha}{\beta}
              \end{array}
              \right)
              $
            \\[22pt] \hline
\rm{S'}$_2$ & $\begin{array}{l}
              -zxy-yxz+xy^{2} \\ +y^{2}x+xz^{2}+z^{2}x
              \\ +x^{3}
\end{array}$
            & nothing
           & $\left\{
              \begin{array}{ll}
               x^{2}+y^{2}+z^{2}, \\
               yx-xz, \\
               zx-xy
              \end{array}
             \right.
             $
            & \rule{0pt}{23pt}$
              \left(
              \begin{array}{ccc}
              1& 0& 0 \\
              0& 0& -1 \\
              0& -1& 0
              \end{array}
              \right)
              $
             \\[15pt] \hline
%
$\rm{T}_1$ & $\begin{array}{l}
             \rule{0pt}{12pt}
             \beta x^2y \\ +(\alpha-\beta+\gamma)xyx \\ 
             +(\alpha-\beta-\gamma)yxy \\ -\alpha y^2x-yxz+yzx
             \\ +\beta yx^2+xyz-xzy \\ -\alpha xy^2+zxy-zyx
\end{array}$
           & $\alpha+\beta+\gamma\neq 0$
           & $\left\{ \!\!
              \begin{array}{ll}
              \beta xy+(\alpha \! - \! \beta+\gamma)yx\\
              \hfill +yz-zy-\alpha y^2, \\
              (\alpha \! - \! \beta \! - \! \gamma)xy-\alpha yx\\
              \hfill -xz+zx+\beta x^2, \\
              xy-yx
              \end{array}
             \right.
             $
           & $
              \left(
              \begin{array}{ccc}
              1& 0& 0 \\
              0& 1& 0 \\              
              \delta& \varepsilon& 1
              \end{array}
              \right)
              $
              
             \rule{0pt}{12pt}
              $
              \delta:=-\alpha+2\beta-\gamma
              $, 
              
              $
              \varepsilon:=2\alpha-\beta-\gamma
              $
           \\ \hline
$\rm{T}_2$ & $\begin{array}{l}
\rule{0pt}{12pt}
              (1-\beta-\gamma)x^{3} \\ -(\alpha+2\gamma)yx^{2}+zx^{2}
              \\ -xy^{2}
              +\gamma y^{3}-zy^{2} \\ -x^{2}z+xzy+\beta x^{2}y
              \\ -y^{2}z+yzx+\alpha y^{2}x
\end{array}$
           & $\alpha+\beta+\gamma\neq 0$
           & $\left\{
              \begin{array}{ll}
              (1-\beta-\gamma)x^{2}-y^{2}\\
              \hfill -xz + zy+\beta xy, \\
             -(\alpha+2\gamma)x^{2}+\gamma y^{2}\\
             \hfill -yz+zx+\alpha yx,\\
             x^{2}-y^{2}
             \end{array}
             \right.
             $
           & $
              \left(
              \begin{array}{ccc}
              0& -1& 0 \\
              -1& 0& 0 \\
              \delta & \varepsilon &  -1
              \end{array}
              \right)
             $
             
             \rule{0pt}{12pt}
             $\delta:=-\beta+\gamma$
             
             $\varepsilon:= -\alpha+\gamma$
            \\ \hline
$\rm{T}_3$ & $\begin{array}{l}
\rule{0pt}{12pt}
             -x^3+y^3+x^2y \\ +xyx +yx^2-xy^2 \\ -yxy  -y^2x+x^2z \\ 
             +xzx 
             +zx^2+
             zy^2 \\ +yzy  +y^2z-xyz \\ -yzx  -zxy
\end{array}$
           & nothing
           & $
             \left\{
             \begin{array}{ll}
             -x^{2}+xy+yx-y^{2} \\
             \quad +xz+zx-yz,\\
             y^{2}+x^{2}-xy-yx\\
             \quad+zy+yz-zx, \\
             x^{2}+y^{2}-xy
             \end{array}
             \right.
             $
           & $
              \left(
              \begin{array}{ccc}
              1& 0& 0 \\
              0& 1& 0 \\
              0& 0& 1
              \end{array}
              \right)
             $
            \\ \hline         
\rm{T'} & $\begin{array}{l}
\rule{0pt}{12pt}
            \alpha x^2y- \! (\alpha-2\beta)xyx \\ 
            +(\beta^2-\alpha\beta)xy^2 \\
            +xyz -xzy+\alpha yx^2 \\ -yxz  +yzx-\alpha yzy
            \\ +\alpha \beta^2 y^3 \\ -(\beta^2-\alpha \beta)y^2x \\
            -\beta y^2z+zxy-zyx \\ -\beta zy^2
\end{array}$
            & $\alpha+2\beta\neq 0$
           & $
             \left\{
             \begin{array}{ll}
            \alpha xy-(\alpha-2\beta)yx\\
            \hfill +(\beta^{2}-\alpha\beta)y^{2} \\
            \hfill +yz-zy, \\
            \alpha x^{2}-xz+zx \\ 
            \hfill -\alpha zy+\alpha \beta^{2} y^{2}\\
            \hfill -(\beta^2-\alpha \beta)yx \\ 
            \hfill -\beta yz, \\
            xy-yx-\beta y^{2}
             \end{array}
             \right.
             $
            & $
              \left(
              \begin{array}{ccc}
              1& \delta& 0 \\
              0& 1& 0 \\
              2 \delta & \delta^{2}& 1
              \end{array}
              \right)
              $
              
               \rule{0pt}{12pt}
              $\delta:=\alpha-\beta$
            \\[22pt] \hline
$\rm{CC}$ & $\begin{array}{l}
\rule{0pt}{12pt}
            -3x^{3}-y^{2}x-yxy \\ -xy^{2}+y^{2}z+yzy \\ +zy^{2}
            -xyz-yzx \\ -zxy+xzy+zyx \\ +yxz
\end{array}$
          & nothing
           & $
             \left\{ \!\! \! 
             \begin{array}{ll}
            -3x^{2}-y^{2}-yz+zy, \\
            -yx-xy+yz+zy\\
            \hfill -zx+xz, \\
            y^{2}-xy+yx
             \end{array}
             \right.
             $
          & $
            \left(
            \begin{array}{ccc}
            1& 0& 0 \\
            0& 1& 0 \\
            0& 0& 1
            \end{array}
            \right)
            $
          \\ \hline

%

 \end{tabular}
}

\noindent
{\small
\begin{tabular}{|p{0.6cm}|@{}p{2.9cm}|p{1.2cm}|p{3.2cm}|p{2.9cm}|}\hline
%
%
$\rm{NC}_1$ & $\begin{array}{l}
              \dfrac{1 \! - \alpha^{3}}{\alpha} x^{3} \! + \! 
              \dfrac{1 \! -\alpha^{3}}{\alpha}y^{3} \\
              +xyz+yzx+zxy \\ -\alpha(xzy+zyx \\ \hfill +yxz)
\end{array}$
            & $\alpha^{3}\neq 0,1$
           & \rule{0pt}{42pt}$
             \left\{
             \begin{array}{ll}
              \dfrac{1-\alpha^{3}}{\alpha} x^{2}+yz \\
              \hfill -\alpha zy, \\
              \dfrac{1-\alpha^{3}}{\alpha}y^{2}+zx \\
              \hfill -\alpha xz, \\
              xy-\alpha yx
             \end{array}
             \right.
             $
            & $
              \left(
              \begin{array}{ccc}
              1& 0& 0 \\
              0& 1& 0 \\
              0& 0& 1
              \end{array}
              \right)
              $
           \\[35pt] \hline
$\rm{NC}_2$ & $\begin{array}{l}
\rule{0pt}{12pt}
              -2xyx+x^{2}z+ \! zx^{2} \\ -2yxy+y^{2}z+zy^{2} \\ +yzx+xzy
\end{array}$
            & nothing
           & $
             \left\{
             \begin{array}{ll}
              -2yx+xz+zy, \\
              -2xy+yz+zx,\\
              x^{2}+y^{2}
             \end{array}
             \right.
             $
            & $
              \left(
              \begin{array}{ccc}
              0& 1& 0 \\
              1& 0& 0 \\
              0& 0& 1
              \end{array}
              \right)
              $
            \\[15pt] \hline

$\rm{WL}_1$ & $\begin{array}{l}
 \rule{0pt}{12pt}
              -(1+\gamma)y^2x \\ +\alpha(1+2\gamma)yxy \\
              -\alpha^2(1+\gamma)xy^2 \\ +\alpha^2xyz+yzx \\
              +\alpha zxy-\alpha^2 xzy \\ -zyx-\alpha yxz
\end{array}$
            & $\alpha\neq 0,1$
           & $
             \left\{ \!\!\!
             \begin{array}{ll}
              -\alpha^{2}(1+\gamma)y^{2} \\
              \hfill +\alpha^{2}yz-\alpha^{2}zy, \\
              -(1+\gamma)yx \\ 
              \  +\alpha(1+2\gamma)xy +zx \\ 
              \hfill -\alpha xz, \\
              \alpha xy-yx
             \end{array}
             \right.
             $
            & $
              \left(
              \begin{array}{ccc}
              \alpha^{2}& 0& 0 \\
              0& \dfrac{1}{\alpha}& 0 \\
              0& \delta& \dfrac{1}{\alpha}
              \end{array}
              \right)
              $
              
              $\delta:=\dfrac{1}{\alpha}(2+3\gamma)$
              \\[8pt] \hline
%
$\rm{WL}_2$ & $\begin{array}{l}
              -(1+\gamma)y^{2}x \\
              +(1+2\gamma)yxy \\ 
              -(1+\gamma)xy^{2}+xyz \\ +yzx+zxy
              -xzy \\ -zyx-yxz
\end{array}$
            & nothing
           &  \rule{0pt}{40pt}$
             \left\{
             \begin{array}{ll}
              -(1+\gamma)y^{2}+yz \\
              \hfill -zy, \\
              -(1+\gamma)yx \\
              \ \  +(1+2\gamma)xy +zx \\
              \hfill -xz, \\
              xy-yx
             \end{array}
             \right.
             $
            & $
              \left(
              \begin{array}{ccc}
              1& 0& 0 \\
              0& 1& 0 \\
              0& 2+3\gamma& 1
              \end{array}
              \right)
              $
            \\[32pt] \hline
$\rm{WL}_3$ & $\begin{array}{l}
 \rule{0pt}{12pt}
              x^{2}y-2xyx+yx^{2} \\ -(1+\gamma)y^{2}x \\ +(1+2\gamma)yxy
              \\ -(1+\gamma)xy^{2}+xyz \\ +yzx+zxy-xzy \\ -zyx-yxz
\end{array}$
            & nothing
           & $
             \left\{ \!\!\!
             \begin{array}{ll}
              xy-2yx-(1+\gamma)y^{2} \\
              \hfill +yz-zy, \\
              x^{2}-(1+\gamma)yx \\
              \hfill +(1+2\gamma)xy +zx \\
              \hfill -xz, \\
              xy-yx
             \end{array}
             \right.
             $
            & $
              \left(
              \begin{array}{ccc}
              1& 0& 0 \\
              0& 1& 0 \\
              3& 2+3\gamma& 1
              \end{array}
              \right)
              $
              \\[33pt] \hline
%
$\rm{TL}_1$& $\begin{array}{l}
 \rule{0pt}{18pt}
             -\dfrac{1}{\alpha^{2}}zxy+\dfrac{1}{\alpha}zyx \\[8pt] +\alpha^{2}yxz
             \\ -\alpha yzx+\dfrac{1}{\alpha}xzy \\[8pt] -\alpha xyz -x^{3}
\end{array}$
           & $\alpha\neq 0$
           & $
             \left\{
             \begin{array}{ll}
             \dfrac{1}{\alpha}zy-\alpha yz -x^{2}, \\
             \alpha^{2}xz-\alpha zx, \\
             -\dfrac{1}{\alpha^{2}}xy+\dfrac{1}{\alpha}yx
             \end{array}
             \right.
             $
           & $
             \left(
             \begin{array}{ccc}
             1& 0& 0 \\
             0& \alpha^{3}& 0 \\
             0& 0& \dfrac{1}{\alpha^{3}}
             \end{array}
             \right)
             $
             \\[35pt] \hline
$\rm{TL}_2$& $\begin{array}{l}
             \beta x^2y+\beta xyx \\ 
             +(-\beta^2 \! - \! 1)x^3 \! \! + \! 2yxy \\ 
             -y^2x \! - \! 2\beta yx^2+zxy
             \\ -zyx \! - \! \beta zx^2 \! -\beta x^2z \\ 
             +2\beta xzx \! - \! yxz \! + \! yzx 
             \\ -xzy +xyz -xy^2
\end{array}$
           & nothing
           &  \rule{0pt}{45pt}$
             \left\{ \!\!
             \begin{array}{ll}
             \beta xy+\beta yx \\
             +(-\beta^2-1)x^{2}  -  \beta xz \\
             \hfill +2\beta zx-zy+yz  \\
             \hfill -y^{2}, \\
             2xy-yx-2\beta x^{2} \\
             \hfill -xz +zx, \\
             xy-yx-\beta x^{2}
             \end{array}
             \right.
             $
           & $
             \left(
             \begin{array}{ccc}
             1& 0& 0 \\
             3\beta& 1& 0 \\
             3\beta& 3& 1
             \end{array}
             \right)
             $
             \\[38pt] \hline
%
%
$\rm{TL}_3$& $\begin {array}{l}
             -2yxy-y^2x+zxy \\ +zyx-yxz-yzx \\ +xzy-xyz
             - x^3 \\
              -  xy^2
\end{array}$
           & nothing
           &  \rule{0pt}{28pt}$
             \left\{ \!
             \begin{array}{ll}
             zy-yz-x^{2}-y^{2}, \\
             -2xy-yx-xz \\
             \hfill -zx, \\
             xy+yx
             \end{array}
             \right.
             $
           & $
             \left(
             \begin{array}{ccc}
             1& 0& 0 \\
             0& -1& 0 \\
             0& 3& -1
             \end{array}
             \right)
             $
             \\[22pt] \hline
$\rm{TL}_4$& $\begin{array}{l}
 \rule{0pt}{12pt}
             -x^3+yx^2+x^2y \\ -2xyx+zxy+xyz \\ +yzx-zyx-yxz \\ -xzy
\end{array}$
           & nothing
           & $
             \left\{
             \begin{array}{ll}
             -x^{2}+xy-2yx \\
             \hfill +yz-zy, \\
             x^{2}+zx-xz,\\
             xy-yx
             \end{array}
             \right.
             $
           & $
             \left(
             \begin{array}{ccc}
             1& 0& 0 \\
             0& 1& 0 \\
             3& 0& 1
             \end{array}
             \right)
             $
             \\[22pt] \hline
\end{tabular}
}
\begin{proof}
All possible defining relations 
$f_{1},f_{2},f_{3}$ of $3$-dimensional quadratic AS-regular algebras 
except for Type EC were given in \cite[Theorem 3.1]{IM}. 
In each type, it is enough to find $w$ 
such that 
$(\partial_{x}{w},\partial_{y}{w},\partial_{z}{w})=(f_{1},f_{2},f_{3})$. 

We will give a proof for Type T$_{2}$ algebras. 
For the other types, the proofs are similar. 
From \cite[Theorem 3.1]{IM}, 
Type T$_{2}$ algebras are given as
$A=k\langle x,y,z \rangle/(f_{1},f_{2},f_{3})$: 
$$
\begin{cases}
f_{1}&=x^{2}-y^{2}, \\
f_{2}&=xz-
zy
-\beta xy+(\beta+\gamma)y^{2},\\
f_{3}&=yz-zx-\alpha yx+(\alpha+\gamma)x^{2}, 
\end{cases}
$$
where $\alpha+\beta+\gamma\neq 0$. 

Taking a potential
$$
\begin{array}{l}
w=(1-\beta-\gamma)x^{3}-(\alpha+2\gamma)yx^{2}+zx^{2}
     -xy^{2}+\gamma y^{3}-zy^{2}\\
      \quad\quad -x^{2}z+xzy+\beta x^{2}y-y^{2}z+yzx+\alpha y^{2}x
\end{array}
$$
as in Table 1, we have 
$$
\begin{cases}
\partial_{x}w&=(1-\beta-\gamma)x^{2}-y^{2}-xz+zy+\beta xy,\\
\partial_{y}w&=-(\alpha+2\gamma)x^{2}+\gamma y^{2}-yz+zx+\alpha yx,\\
\partial_{z}w&=x^{2}-y^{2}. 
\end{cases}
$$
Since $\partial_{z}w=x^{2}-y^{2}=f_{1}$, 
$\partial_{x}w=(1-\beta-\gamma)f_{1}-f_{2}$
and $\partial_{y}w=-\gamma f_{1}-f_{3}$, 
it follows that
$
\mathcal{D}(w)
=k\langle x,y,z\rangle/(\partial_{x}w, \partial_{y}w,\partial_{z}w)
=k\langle x,y,z\rangle/(f_{1}, f_{2}, f_{3})=A
$. 
\end{proof}
\begin{rem}
\label{rem_TL4}
(1)\quad
	For an algebra $A=k \langle x,y,z \rangle/(f_{1},f_{2},f_{3})$ of any 
type, 
	we can take a potential $w=xf_{1}+yf_{2}+zf_{3}$ such that $A=\mathcal{D}(w)$.
	But it is difficult to check that this potential 
        $w$ is a regular twisted superpotential
	and, in many cases, it is not so.
	The potentials $w$ listed in Proposition \ref{TSP} were chosen so that
	they are candidates of regular twisted superpotentials. 
By \cite[Theorem 4.4]{MS1}, 
every regular twisted superpotential $w$ satisfies
$(\nu^{-1}\otimes \mathrm{id}\otimes\mathrm{id})(\varphi(w))=w$
where $\nu$ is the Nakayama automorphism of $A^{!}$, so
in the above proposition,
we take a potential $w$ such that
$((\nu_{A^{!}})^{-1}\otimes \mathrm{id}\otimes\mathrm{id})(\varphi(w))=w$
where $\nu_{A^{!}}$ is the Nakayama automorphism of $A^!$
listed in Table $1$.

\noindent
(2)\quad 
For Type TL$_{4}$ in \cite[Theorem 3.1]{IM}, 
we gave the defining relations 
$$
\begin{cases}
&xy+yx,\\
&xz-zx-x^{2},\\
&zy-yz+xy+x^{2}.
\end{cases}
$$
The relation $xy+yx$ is a typo, 
and the relation $xy-yx$ is correct, 
which is given in Table 1 as above. 
\end{rem}
Next, we give a complete list of Calabi-Yau superpotentials as follows: 
\begin{thm}
\label{SP}
For every type except for Type EC, 
the following table is a complete list of Calabi-Yau superpotentials $w_{0}$. 
\end{thm}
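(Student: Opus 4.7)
The plan is to combine Proposition \ref{TSP} with Lemma \ref{thmRRZ} and Lemma \ref{CY-superpotential}. By Proposition \ref{TSP} every $3$-dimensional quadratic Calabi-Yau AS-regular algebra of non-EC type is isomorphic to $\mathcal{D}(w)$ for some $w$ appearing in Table $1$, and by Lemma \ref{thmRRZ} such $\mathcal{D}(w)$ is Calabi-Yau precisely when the Nakayama automorphism $\nu_{A^{!}}$ equals the identity. Since Table $1$ already records $\nu_{A^{!}}$ in its last column, the Calabi-Yau locus inside each row is cut out by imposing $\nu_{A^{!}}=I$ as a system of equations in the free parameters.

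First I scan the Nakayama column row by row and solve $\nu_{A^{!}}=I$. This triages the types into three families. Types whose Nakayama matrix contains a forced non-identity entry that is independent of the parameters---for instance the constant $3$ appearing in the upper-triangular part of Types $\mathrm{P}_{3}$, $\mathrm{WL}_{3}$, $\mathrm{TL}_{4}$, or the $-1$ on the diagonal of Type $\mathrm{T}_{2}$---contribute no Calabi-Yau representative at all. Types with enough free parameters force a symmetric specialization: $\alpha=\beta=\gamma$ in Types $\mathrm{P}_{1}$, $\mathrm{S}_{1}$, $\mathrm{T}_{1}$; $\alpha=\beta$ in Types $\mathrm{S}'_{1}$ and $\mathrm{T}'$; and analogous single conditions in Types $\mathrm{WL}_{1}$, $\mathrm{WL}_{2}$, $\mathrm{TL}_{1}$. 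Finally, the types whose Nakayama matrix is already the identity for every admissible parameter value ($\mathrm{S}_{3}$, $\mathrm{T}_{3}$, $\mathrm{CC}$, $\mathrm{NC}_{1}$) contribute their entire parameter family.

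Next, for each $w_{0}$ produced by such a specialization I verify directly that $\varphi(w_{0})=w_{0}$, i.e.\ that $w_{0}$ is a superpotential. This is a mechanical monomial-by-monomial check of the cyclic shift on $V^{\otimes 3}$. Because $\mathcal{D}(w_{0})$ is AS-regular by Proposition \ref{TSP} and \cite[Theorem 3.1]{IM}, the potential $w_{0}$ is regular, and Lemma \ref{CY-superpotential} then upgrades the superpotential property to the Calabi-Yau property. Completeness of the resulting list follows from the converse direction of the same two lemmas: any Calabi-Yau algebra sits in some row of Table $1$ and satisfies $\nu_{A^{!}}=I$, hence is captured by exactly one specialization in our list.

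The main obstacle is bookkeeping rather than conceptual difficulty. I must march through all twenty-odd rows and, in types with several parameters, resolve the linear system $\nu_{A^{!}}=I$ carefully; for example in Type $\mathrm{T}_{1}$ the pair $-\alpha+2\beta-\gamma=0$ and $2\alpha-\beta-\gamma=0$ has to be solved simultaneously to recover $\alpha=\beta=\gamma$. A secondary subtlety is that a parameter specialization may land on the boundary of its row (e.g.\ setting $\alpha=1$ in a row whose side condition forbids $\alpha=1$), and such boundary cases must be reassigned to the correct row of the classification of \cite{IM} to prevent either duplication or omission from the final list.
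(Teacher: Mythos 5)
Your reduction of the problem to solving $\nu_{A^{!}}=I$ row by row in Table 1 matches the paper's first step, and your completeness argument (any Calabi--Yau AS-regular algebra of non-EC type appears in some row of Table 1 by Proposition \ref{TSP}, hence must satisfy $\nu_{A^{!}}=I$) is sound. The gap is in the existence direction: you assert that $\mathcal{D}(w_{0})$ is AS-regular ``by Proposition \ref{TSP} and [IM, Theorem 3.1]'', but neither source gives you this. Proposition \ref{TSP} is a one-way statement --- every AS-regular algebra of non-EC type is isomorphic to some $\mathcal{D}(w)$ with $w$ in Table 1 --- and says nothing about whether a given $w$ in the table is regular; and the paper states explicitly (Introduction and Remark \ref{rem_reg}) that AS-regularity of the algebras listed in [IM, Theorem 3.1] was \emph{not} verified there and is precisely one of the goals of the present paper. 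The logical order is in fact the reverse of what you use: regularity of the general Table 1 potentials (Theorem \ref{thm-RTSP}) is deduced \emph{from} Theorem \ref{SP} via MS twists of the Calabi--Yau superpotentials, so invoking that regularity inside the proof of Theorem \ref{SP} is circular.

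What is missing is therefore a direct proof that each $w_{0}$ in Table 2 is regular. The paper supplies this by checking the two conditions of Theorem \ref{ATV1_thm1}: write $(\partial_{x}w_{0},\partial_{y}w_{0},\partial_{z}w_{0})^{t}=M\,(x,y,z)^{t}$, verify that the partial derivatives are linearly independent and that the entries of $(x,y,z)M$ again span the relation space (standardness; for a superpotential this reduces to linear independence of the partials), and then compute the $2\times 2$ minors of $M$ and show that their common zero locus in $\mathbb{P}^{2}$ is empty. Once regularity is established this way, Lemma \ref{CY-superpotential} upgrades the superpotential property to the Calabi--Yau property exactly as you intend. Your row-by-row bookkeeping and the boundary-case caveat are reasonable, but without the ATV-type regularity check the theorem is not proved.
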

\noindent
{\renewcommand\arraystretch{1.1}
{\small
\begin{tabular}{|c|p{4.5cm}|p{1.3cm}|p{4.6cm}|}
\multicolumn{4}{c}{Table 2}
\\[5pt]
\hline
& CY Superpotential $w_{0}$ 
           & Cond. 
           & $\partial_{x}w_{0}$, $\partial_{y}w_{0}$, $\partial_{z}w_{0}$
           \\ \hline\hline
$\rm{P}_1$ & 
            $\begin{array}{l}
             xyz+yzx+zxy \\ -\alpha(xzy+zyx+yxz)
\end{array}$
           & $\alpha^{3}=1$
           &  \rule{0pt}{25pt}$\left\{
             \begin{array}{ll}
             yz-\alpha zy,\\
             zx-\alpha xz,\\
             xy-\alpha yx
             \end{array}
             \right.
             $
           \\[18pt] \hline
$\rm{S}_1$ & $\begin{array}{l}
              xyz+ yzx+ zxy \\ -\alpha (xzy+zyx+yxz)
\end{array}$
           & $\alpha^{3}\neq 0,1$
           & \rule{0pt}{25pt}$\left\{
             \begin{array}{ll}
              yz-\alpha zy,\\
              zx-\alpha xz,\\
              xy-\alpha yx
             \end{array}
             \right.
             $
           \\[18pt] \hline
           %
$\rm{S}_3$ & $
             xzy+zyx+yxz-\alpha(x^{3}+y^{3}+z^{3})
             $
           & $\alpha^{3}\neq 0,1$
           & \rule{0pt}{25pt}$\left\{
             \begin{array}{ll}
             zy-\alpha x^{2},\\
             xz-\alpha y^{2},\\
             yx-\alpha z^{2}
             \end{array}
             \right.
             $
           \\[18pt] \hline
\rm{S'}$_1$ & $\begin{array}{l}
                xyz+ yzx+ zxy \\
               -\alpha (xzy+zyx+yxz)+x^{3}
\end{array}$
            & $\alpha^{3}\neq 0,1$
            & \rule{0pt}{25pt}$\left\{
             \begin{array}{ll}
               yz-\alpha zy+x^{2}, \\
               zx-\alpha xz, \\
               xy-\alpha yx
             \end{array}
             \right.
             $
            \\[18pt] \hline
$\rm{T}_1$ & $\begin{array}{l}
             xyz+yzx+zxy \\ -(xzy+zyx+yxz) \\ 
             +(x^2y+xyx+yx^2) \\
             -(y^2x+yxy+xy^2) 
\end{array}$
            & nothing
            & $\left\{
             \begin{array}{ll}
             yz-zy+xy+yx-y^{2}, \\
             zx-xz+x^{2}-yx-xy, \\
             xy-yx
             \end{array}
             \right.
             $
           \\ \hline
%
$\rm{T}_3$ & $\begin{array}{l}
             xyz+yzx+zxy \\ -(x^{2}y+xyx+yx^{2}) \\ 
             +(xy^{2}+yxy+y^{2}x) \\ -(x^{2}z+xzx+zx^{2}) \\ -
             (zy^{2}+yzy+y^{2}z)+x^{3}-y^{3}
\end{array}$
           & nothing
            & \rule{0pt}{35pt}$\left\{
             \begin{array}{ll}
             yz-xy-yx+y^{2}-xz-zx \\
             \hfill +x^{2}, \\
             zx-x^{2}+xy+yx-zy-yz \\ 
             \hfill -y^{2}, \\
             xy-x^{2}-y^{2}
             \end{array}
             \right.
             $
            \\[30pt] \hline
\rm{T'} & $\begin{array}{l}
             xyz+yzx+zxy \\ -(xzy+zyx+yxz) \\
            +(x^{2}y+xyx+yx^{2}) \\
            -(y^{2}z+yzy+zy^{2})+y^{3}
\end{array}$
            & nothing
            & $\left\{
             \begin{array}{ll}
            yz-zy+xy+yx, \\
            zx-xz+x^{2}-yz-zy+y^{2}, \\
            xy-yx-y^{2}
             \end{array}
             \right.
             $
            \\ \hline
$\rm{CC}$ & $\begin{array}{l}
            xyz+yzx+zxy \\ -(xzy+zyx+yxz) \\ 
            +(y^{2}x+yxy+xy^{2}) \\ -(y^{2}z+yzy+zy^{2})+3x^{3}
\end{array}$
          & nothing
          & $\left\{
             \begin{array}{ll}
            yz-zy+y^{2}+3x^{2}, \\
            zx-xz+yx+xy-yz-zy, \\
            xy-yx-y^{2}
             \end{array}
             \right.
             $
          \\ \hline
$\rm{NC}_1$ & $\begin{array}{l}
              xyz+yzx+zxy \\ -\alpha(xzy+zyx+yxz)
              +x^{3} {\!} + {\!} y^{3}
\end{array}$
            & $\alpha^{3}\neq 0,1$
            & \rule{0pt}{25pt}$\left\{
             \begin{array}{ll}
               yz-\alpha zy+x^{2}, \\
               zx-\alpha xz+y^{2}, \\
               xy-\alpha yx
             \end{array}
             \right.
             $
           \\[18pt] \hline

%
%
           \hline
$\rm{WL}_2$ & $\begin{array}{l}
              xyz+yzx+zxy \\ -(xzy+zyx+yxz) \\
              -\dfrac{1}{3}(y^{2}x+yxy+xy^{2})
\end{array}$\vspace{0.5em}
            & nothing
            & \rule{0pt}{35pt}$\left\{
             \begin{array}{ll}
             yz-zy-\dfrac{1}{3}y^{2},
             \vspace{0.5em} \\
             zx-xz-\dfrac{1}{3}(yx+xy), \\
             xy-yx
             \end{array}
             \right.
             $
            \\[25pt] \hline
$\rm{TL}_1$& $\begin{array}{l}
             xyz+yzx+zxy \\
             -\alpha(xzy+zyx+yxz)
              -x^{3}
\end{array}$
           & $\alpha^{3}=1$
            & \rule{0pt}{25pt}$\left\{
             \begin{array}{ll}
             yz-\alpha zy -x^{2},\\
             zx-\alpha xz,\\
             xy-\alpha yx
             \end{array}
             \right.
             $          
             \\[18pt] \hline
\end{tabular}
}}
\begin{proof}
Let $A$ be a $3$-dimensional quadratic AS-regular algebra 
except for Type EC.  
By Lemma \ref{thmRRZ}, 
$A$ is Calabi-Yau if and only if the Nakayama automorphism $\nu_{A^{!}}$ 
is 
the identity, 
where $A^{!}$ is the  quadratic dual of $A$. 
Considering the condition that $\nu_{A^{!}}$ in Table 1 is the identity, 
we have a superpotential, so
it is sufficient to show that $w_{0}$ is regular, that is, 
$\mathcal{D}(w_{0})=k\langle x,y,z \rangle /(\partial_{x}w_{0},\partial_{y}w_{0},\partial_{z}w_{0})$ 
is a $3$-dimensional quadratic AS-regular algebra. 
In fact, if $w$ is regular, then by Lemma \ref{CY-superpotential}, it is Calabi-Yau.
In order to prove AS-regularity of $\mathcal{D}(w_{0})$, 
we will check that $\mathcal{D}(w_{0})$ satisfies the conditions of Theorem \ref{ATV1_thm1}. 
Note that if $w$ is a superpotential, then the derivation-quotient algebra $\mathcal{D}(w)$
is standard if and only if the partial derivatives $\partial_{x}x$, $\partial_{y}w$, $\partial_{z}w$
are linearly independent (for example, see \cite[Proposition 2.6]{MS2}).

We will give a proof for Type T$_{1}$ algebras. 
For the other types, the proofs are similar. 
Let $w_{0}=xyz+yzx+zxy-(xzy+zyx+yxz) 
+(x^2y+xyx+yx^2)
-(y^2x+yxy+xy^2)$.
It is easy to check that $\partial_{x}w_{0},\partial_{y}w_{0},\partial_{z}w_{0}$
are linearly independent. 
For the potential $w_{0}$,
we have the unique $3 \times 3$ matrix
\begin{center}
$
M:=
\left(
\begin{array}{ccc}
y & x-y-z & y \\
x-y+z & -x & -x \\
-y & x & 0
\end{array}
\right)
$
such that $\left(
\begin{array}{c}
\partial_{x}w_{0} \\
\partial_{y}w_{0} \\
\partial_{z}w_{0}
\end{array}
\right)=M\left(
\begin{array}{c}
x \\
y \\
z
\end{array}
\right)$. 
\end{center}
By calculation, we have 
$
\left(
\begin{array}{ccc}
x & y & z
\end{array}
\right)
M=
\left(
\begin{array}{ccc}
\partial_{x}w_{0} &
\partial_{y}w_{0} &
\partial_{z}w_{0}
\end{array}
\right)
$. 
Hence, $\mathcal{D}(w_{0})$ is standard. 
We denote by $\Delta_{ij}$ the $(i,j)$-th $2\times 2$ minors 
of the matrix $M$ ($1\leq i,j \leq 3$). 
Since $\Delta_{11}=-x^{2}$, $\Delta_{22}=y^{2}$ and
$\Delta_{33}=-x^{2}+xy-y^{2}+z^{2}$,
we have that $\mathcal{V}(\{\Delta_{ij}\mid 1\leq i,j \leq 3\})=\emptyset$.

Therefore, by Theorem \ref{ATV1_thm1}, 
$\mathcal{D}(w_{0})$ is a $3$-dimensional quadratic AS-regular algebra, 
that is, 
$w_{0}$ is a Calabi-Yau superpotential. 
\end{proof}
\begin{thm}
\label{prop-3}
For a potential $w$ in Table $1$, 
there exist a Calabi-Yau superpotential $w_{0}$ in Table $2$ 
and $\theta \in {\rm Aut}\,(w_{0})$ such that 
$\mathcal{D}(w)\cong \mathcal{D}((w_{0})^{\theta})$
as in Table $3$.
\end{thm}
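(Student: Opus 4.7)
The plan is to verify the claim by case analysis over the 21 rows of Table~1. For each row, Table~3 prescribes both the target Calabi-Yau superpotential $w_{0}$ from Table~2 and a candidate matrix $\theta \in {\rm GL}(V)$, and the proof reduces to two mechanical checks: (i) compute $(\theta^{\otimes 3})(w_{0})$ by monomial expansion and verify that the result equals $\lambda w_{0}$ for some $\lambda \in k \setminus \{0\}$, which confirms $\theta \in {\rm Aut}\,(w_{0})$; (ii) compute the MS twist $(w_{0})^{\theta} = (\theta^{2} \otimes \theta \otimes {\rm id})(w_{0})$ and compare, monomial by monomial, with the potential $w$ in the corresponding row of Table~1, possibly after rescaling $w_{0}$ by a global non-zero scalar.

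The pairing between the two tables splits into two natural groups. When $w$ belongs to a type that also appears in Table~2 (namely $\mathrm{P}_{1}$, $\mathrm{S}_{1}$, $\mathrm{S}_{3}$, $\mathrm{S}'_{1}$, $\mathrm{T}_{1}$, $\mathrm{T}_{3}$, $\mathrm{T}'$, $\mathrm{CC}$, $\mathrm{NC}_{1}$, $\mathrm{WL}_{2}$, $\mathrm{TL}_{1}$), I take $w_{0}$ to be the CY superpotential of the \emph{same} type and expect $\theta$ to be diagonal with entries built from the parameters $\alpha,\beta,\gamma$ appearing in the Table~1 row (or the identity when no parameter is present). Since a diagonal automorphism acts on each tensor monomial by a scalar, both checks reduce to identifying the product of diagonal entries that rescales each monomial of $w_{0}$ to match the coefficient of the corresponding monomial of $w$; the verification that $(\theta^{\otimes 3})(w_{0}) \in k w_{0}$ is immediate because every monomial of $w_{0}$ has the same multidegree in $(x,y,z)$. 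For the remaining rows ($\mathrm{P}_{2}$, $\mathrm{P}_{3}$, $\mathrm{S}_{2}$, $\mathrm{S}'_{2}$, $\mathrm{T}_{2}$, $\mathrm{NC}_{2}$, $\mathrm{WL}_{1}$, $\mathrm{WL}_{3}$, $\mathrm{TL}_{2}$, $\mathrm{TL}_{3}$, $\mathrm{TL}_{4}$), whose types are absent from Table~2, I pair each with the structurally closest CY superpotential (for instance $\mathrm{P}_{1}$ for $\mathrm{P}_{2}$ and $\mathrm{P}_{3}$; $\mathrm{T}_{1}$ for $\mathrm{T}_{2}$; $\mathrm{NC}_{1}$ for $\mathrm{NC}_{2}$; $\mathrm{WL}_{2}$ for $\mathrm{WL}_{1}$ and $\mathrm{WL}_{3}$; $\mathrm{TL}_{1}$ for $\mathrm{TL}_{2}$--$\mathrm{TL}_{4}$) and take $\theta$ to be either a Jordan block or a matrix with a monomial-permutation component, whose triple action on $w_{0}$ produces the extra cross-terms (e.g.\ summands such as $y^{2}z$, $xzx$, or pure cubes) that appear in $w$.

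The main obstacle is not conceptual but organizational: discovering, for each non-diagonal row, the correct $\theta$ so that simultaneously $\theta$ preserves $w_{0}$ up to scalar and $(w_{0})^{\theta}$ reproduces every coefficient listed in Table~1. Once $\theta$ is correctly tabulated in Table~3, each verification is a routine application of the definitions of ${\rm Aut}\,(w_{0})$ and of the MS twist, using only that $\theta^{\otimes 3}$ and $\theta^{2}\otimes \theta \otimes {\rm id}$ act linearly on each tensor basis element $x_{i}\otimes x_{j}\otimes x_{k}$. Combined with Lemma~\ref{twist} and Lemma~\ref{lem-2}, the equality $w = (w_{0})^{\theta}$ then yields $\mathcal{D}(w) \cong \mathcal{D}(w_{0})^{\theta}$ as graded $k$-algebras, which feeds directly into the AS-regularity statement of Theorem~\ref{thm-RTSP} and, via Lemma~\ref{twistingequi}, into the graded Morita equivalence needed for Theorem~\ref{conj}.
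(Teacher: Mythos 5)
Your proposal is essentially the paper's own proof: a row-by-row verification, with exactly the pairings the paper records in Table~3 (same-type CY superpotentials with diagonal or identity $\theta$ where the type survives in Table~2, and the structurally closest superpotential with a unipotent or permutation-type $\theta$ for the remaining rows), checking $(\theta^{\otimes 3})(w_{0})\in k^{\times}w_{0}$ and matching $(w_{0})^{\theta}$ against $w$ monomial by monomial. One small caution: in some rows (e.g.\ Type $\mathrm{T}_{1}$) the twist $(w_{0})^{\theta}$ agrees with $w$ only after a further diagonal change of variables $\theta'$ (rescaling one generator), not merely a global scalar, which is precisely the extra step the paper's sample computation carries out before concluding $\mathcal{D}((w_{0})^{\theta})\cong\mathcal{D}(w)$.
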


\noindent
\begin{tabular}{|p{0.8cm}|p{5cm}|p{5cm}|}
\multicolumn{3}{c}{Table 3}
\\[5pt]
\hline
\quad             & $w_{0}$ 
                       & $\theta$ \\ \hline\hline
\rm{P}$_{1}$ & $xyz+yzx+zxy-(xzy+zyx+yxz)$ 
                      &  
                      \rule{0pt}{25pt}$
                     \left(
                     \begin{array}{ccc}
                     \alpha & 0& 0 \\
                     0 & \beta & 0 \\
                    0 & 0& \gamma
                    \end{array}
                     \right)
                    $ 
                    \\[17pt] \hline
\rm{P}$_{2}$ & $xyz+yzx+zxy-(xzy+zyx+yxz)$ 
                      & \rule{0pt}{25pt}$
                        \left(
                        \begin{array}{ccc}
                        1 & 1& 0 \\
                        0 & 1 & 0 \\
                        0 & 0& \alpha
                        \end{array}
                        \right)
                        $\\[17pt] \hline
                        
                        \rm{P}$_{3}$ & $xyz+yzx+zxy-(xzy+zyx+yxz)$ 
                      & 
                      \rule{0pt}{25pt}$
                      \left(
                      \begin{array}{ccc}
                      1 & 1& 0 \\
                      0 & 1 & 1 \\
                      0 & 0& 1
                     \end{array}
                     \right)
                   $\\[17pt] \hline
\rm{S}$_{1}$ &  $(xyz+yzx+zxy)-\sqrt[3]{\alpha\beta\gamma}(xzy+yxz+zyx)$
                      &  \rule{0pt}{25pt}$
                          \left(
                          \begin{array}{c@{}c@{}c}
                          \sqrt[3]{\beta\gamma^{-1}}
                          & 0& 0 \\
                           0 & \sqrt[3]{\gamma\alpha^{-1}} & 0 \\
                          0 & 0& \sqrt[3]{\alpha\beta^{-1}}
                          \end{array}
                          \right)
                           $
                           \\[17pt] \hline
\rm{S}$_{2}$ & $(xyz+yzx+zxy)+(xzy+yxz+zyx)$
                      & \rule{0pt}{28pt}$
                         \left(
                         \begin{array}{c@{}c@{}c}
                         0 & -\sqrt[3]{\alpha^{2}\beta}& 0 \\
                         -\frac{1}{\sqrt[3]{\alpha\beta^{2}}} & 0 & 0 \\
                        0 & 0& \sqrt[3]{\beta\alpha^{-1}}
                        \end{array}
                        \right)
                        $
                        \\[19pt] \hline
\rm{S}$_{3}$ & $(xzy+yxz+zyx)-\alpha(x^{3}+y^{3}+z^{3})$
                      & \rule{0pt}{25pt}$
                         \left(
                         \begin{array}{ccc}
                          1 & 0& 0 \\
                          0 & 1 & 0 \\
                          0 & 0& 1
                          \end{array}
                           \right)
                          $
                          \\[17pt] \hline
\end{tabular}

\noindent
\begin{tabular}{|p{0.8cm}|p{5cm}|p{5cm}|}
\hline
\rm{S'}$_{1}$ &  $(xyz+yzx+zxy)-\sqrt[3]{\alpha\beta^{2}}(xzy+yxz+zyx)+x^{3}$
                       &  \rule{0pt}{25pt}$
                           \left(
                           \begin{array}{ccc}
                            1 & 0& 0 \\
                            0 & \sqrt[3]{\beta\alpha^{-1}} & 0 \\
                           0 & 0& \sqrt[3]{\alpha\beta^{-1}}
                           \end{array}
                            \right)
                           $ 
                           \\[17pt] \hline
\rm{S'}$_{2}$ & $(xyz+yzx+zxy)+(xzy+yxz+zyx)+x^{3}$
                      & 
                      \rule{0pt}{25pt}$
                      \left(
                      \begin{array}{ccc}
                      1 & 0& 0 \\
                      0 & 0 & 1 \\
                      0 & 1& 0
                      \end{array}
                      \right)
                       $
                        \\[17pt] \hline
%
\rm{T}$_{1}$ &  $(xyz+yzx+zxy)-(xzy+yxz+zyx)+(x^{2}y+xyx+yx^{2})-(xy^{2}+yxy+y^{2}x)$
                      &  \rule{0pt}{25pt}$
                          \left(
                          \begin{array}{ccc}
                            1 & 0& 0 \\
                            0 & 1 & 0 \\
                           \lambda\nu^{-1} & \mu\nu^{-1}& 1
                          \end{array}
                          \right)
                         $ 
                         
                         \rule{0pt}{12pt}$\lambda:=\frac{1}{3}(-\alpha+2\beta-\gamma)$,

                         \rule{0pt}{12pt}$\mu:=\frac{1}{3}(2\alpha-\beta-\gamma)$,

                         \rule{0pt}{12pt}$\nu:=\frac{1}{3}(\alpha+\beta+\gamma)$
                      \\[5pt] \hline
\rm{T}$_{2}$ & $(xyz+yzx+zxy)-(xzy+yxz+zyx)+(x^{2}y+xyx+yx^{2})-(xy^{2}+yxy+y^{2}x)$
                      & \rule{0pt}{25pt}$
                         \left(
                         \begin{array}{ccc}
                         0 & -1& 0 \\
                         -1 & 0 & 0 \\
                        -\lambda\nu^{-1} & -\mu\nu^{-1}& -1
                        \end{array}
                        \right)
                        $ 
                       
                       \rule{0pt}{12pt}$\lambda:=\frac{1}{3}(-\alpha+2\beta-\gamma)$,

                         \rule{0pt}{12pt}$\mu:=\frac{1}{3}(2\alpha-\beta-\gamma)$,

                         \rule{0pt}{12pt}$\nu:=\frac{1}{3}(\alpha+\beta+\gamma)$
                       \\[5pt] \hline
\rm{T}$_{3}$ &  \rule{0pt}{30pt}$\begin{array}{@{}l}(xyz+yzx+zxy)-(x^{2}y+xyx \\
+yx^{2})+(xy^{2}+yxy+y^{2}x) \\ -(x^{2}z+xzx+zx^{2})-(zy^{2} \\
+yzy+y^{2}z)+x^{3}-y^{3}\end{array}$
                      & $
                        \left(
                        \begin{array}{ccc}
                        1 & 0& 0 \\
                        0 & 1 & 0 \\
                        0 & 0& 1
                        \end{array}
                          \right)$
                          \\[21pt] \hline
\rm{T'} &  $(xyz+yzx+zxy)-(xzy+yxz+zyx)+(x^{2}y+xyx+yx^{2})-(y^{2}z+yzy+zy^{2})+y^{3}$
             &   \rule{0pt}{25pt}$
                 \left(
                 \begin{array}{ccc}
                 1 & \lambda^{-1}\mu& 0 \\
                 0 & 1 & 0 \\
                 2\lambda^{-1}\mu & \lambda^{-2}\mu^{2}& 1
                 \end{array}
                 \right)
                 $

                  \rule{0pt}{12pt}$\lambda:=\frac{1}{3}(\alpha+2\beta)$,

                  \rule{0pt}{12pt}$\mu:=\frac{1}{3}(\alpha-\beta)$
                  \\[5pt] \hline
%
\rm{CC} &  $\begin{array}{@{}l}(xyz+yzx+zxy)-(xzy+yxz \\
+zyx)+(y^{2}x+yxy+xy^{2}) \\
-(y^{2}z+yzy+zy^{2})+3x^{3}\end{array}$
               & 
                \rule{0pt}{25pt}$
               \left(
               \begin{array}{ccc}
                1 & 0& 0 \\
               0 & 1 & 0 \\
               0 & 0& 1
               \end{array}
               \right)
               $
               \\[18pt] \hline
%
\rm{NC}$_{1}$ & $(xyz+yzx+zxy)-\alpha(xzy+yxz+zyx)+x^{3}+y^{3}$
                      &   \rule{0pt}{25pt}$
                           \left(
                           \begin{array}{ccc}
                            1 & 0& 0 \\
                            0 & 1 & 0 \\
                            0 & 0& 1
                           \end{array}
                          \right)
                          $
                         \\[18pt] \hline
\rm{NC}$_{2}$ & $(xyz+yzx+zxy)+(xzy+yxz+zyx)+x^{3}+y^{3}$
                      &  \rule{0pt}{25pt}$
                        \left(
                        \begin{array}{ccc}
                        0 & 1& 0 \\
                        1 & 0 & 0 \\
                        0 & 0& 1
                      \end{array}
                       \right)
                      $
                      \\[18pt] \hline

\rm{WL}$_{1}$ &  $\begin{array}{@{}l}xyz+yzx+zxy-(xzy+zyx \\
+yxz)-\dfrac{1}{3}(y^{2}x+yxy+xy^{2})\end{array}$
                      &   \rule{0pt}{30pt}$
                          \left(
                          \begin{array}{ccc}
                          \alpha & 0& 0 \\
                           0 & 1 & 0 \\
                           0 & \dfrac{2}{3}+\gamma & 1
                          \end{array}
                          \right)
                        $                                            
                      \\[20pt] \hline
\end{tabular}
%


\noindent
\begin{tabular}{|p{0.8cm}|p{5cm}|p{5cm}|}
\hline
\rm{WL}$_{2}$ & $\begin{array}{@{}l}xyz+yzx+zxy-(xzy+zyx \\
+yxz)-\dfrac{1}{3}(y^{2}x+yxy+xy^{2})\end{array}$
                      &  \rule{0pt}{30pt}$
                          \left(
                         \begin{array}{ccc}
                         1 & 0& 0 \\
                         0 & 1 & 0 \\
                        0 & \dfrac{2}{3}+\gamma & 1
                       \end{array}
                       \right)
                      $
                       \\[20pt] \hline
\rm{WL}$_{3}$ & $\begin{array}{@{}l}xyz+yzx+zxy-(xzy+zyx \\
+yxz)-\dfrac{1}{3}(y^{2}x+yxy+xy^{2})\end{array}$
                      &  \rule{0pt}{30pt}$
                         \left(
                        \begin{array}{ccc}
                         1 & 0& 0 \\
                         0 & 1 & 0 \\
                          1 & \dfrac{2}{3}+\gamma& 1
                         \end{array}
                         \right)
                        $
                          \\[20pt] \hline

\rm{TL}$_{1}$ & $xyz+yzx+zxy-(xzy+zyx+yxz)-x^{3}$
                      &   \rule{0pt}{25pt}$
                          \left(
                          \begin{array}{ccc}
                          1 & 0& 0 \\
                          0 & \gamma & 0 \\
                          0 & 0& \gamma^{-1}
                         \end{array}
                        \right)
                        $  
                      \\[18pt] \hline
\rm{TL}$_{2}$ & $xyz+yzx+zxy-(xzy+zyx+yxz)-x^{3}$
                      &  \rule{0pt}{25pt}$
                         \left(
                         \begin{array}{ccc}
                         1 & 0& 0 \\
                         \beta & 1 & 0 \\
                         0 & 1& 1
                        \end{array}
                        \right)
                        $ 
                       \\[18pt] \hline
\rm{TL}$_{3}$ & $xyz+yzx+zxy-(xzy+zyx+yxz)-x^{3}$
                      &  \rule{0pt}{25pt}$
                          \left(
                          \begin{array}{ccc}
                          1 & 0& 0 \\
                          0 & -1 & 0 \\
                          0 & 1& -1
                          \end{array}
                          \right)
                           $
                        \\[18pt] \hline
\rm{TL}$_{4}$ & $xyz+yzx+zxy-(xzy+zyx+yxz)-x^{3}$
                      & 
                       \rule{0pt}{25pt}$
                       \left(
                       \begin{array}{ccc}
                       1 & 0& 0 \\
                       0 & 1 & 0 \\
                       1 & 0& 1
                       \end{array}
                       \right)
                       $
                        \\[18pt] \hline
\end{tabular}

\begin{proof}
	By direct computation, for a potential $w$ in Table 1, 
	we find a Calabi-Yau superpotential $w_{0}$ in Table 2 
	and $\theta \in {\rm Aut}\,(w_{0})$ such that 
	$\mathcal{D}(w)\cong\mathcal{D}((w_{0})^{\theta})$
	as in Table 3. 
	
	We will give a proof for 
	Type T$_{1}$ algebra. 
	For the other types, the proofs are similar. 
	Let $w$ be a potential of Type T$_{1}$ in Table $1$. 
	We take a superpotential 
	$w_{0}=xyz+yzx+zxy-(xzy+yxz+zyx)+(x^{2}y+xyx+yx^{2})-(xy^{2}+yxy+y^{2}x)$ and
	$\theta=
	\left(
	\begin{array}{ccc}
	1 & 0& 0 \\
	0 & 1 & 0 \\
	\lambda\nu^{-1} & \mu\nu^{-1}& 1
	\end{array}
	\right)
	$
	where 
	$\lambda:=\frac{1}{3}(-\alpha+2\beta-\gamma)$,
	$\mu:=\frac{1}{3}(2\alpha-\beta-\gamma)$ and
	$\nu:=\frac{1}{3}(\alpha+\beta+\gamma)$.
	Since $(\theta^{\otimes 3})(w_{0})=w_{0}$, 
	$\theta \in {\rm Aut}\,(w_{0})$.
	By calculation, we have that
	$(w_{0})^{\theta}=(xyz+yzx+zxy)-(xzy+yxz+zyx)
	+\nu^{-1}(\beta x^{2}y+(\alpha-\beta+\gamma)xyx+\beta yx^{2})
	-\nu^{-1}(\alpha y^{2}x+(-\alpha+\beta+\gamma)yxy+\alpha xy^{2})$.
	By taking
	$\theta'=
	\left(
	\begin{array}{ccc}
	1 & 0& 0 \\
	0 & 1 & 0 \\
	0 & 0 & \nu^{-1}
	\end{array}
	\right)
	\in {\rm GL}_{3}(k)$, it follows that $(\theta'^{\otimes^{3}})((w_{0})^{\theta})=\nu^{-1}w$, 
	so 
	$\mathcal{D}((w_{0})^{\theta}) \cong \mathcal{D}(w)$.
\end{proof}
We remark how we find $\theta$ in Table $3$. 
We take the third root of the matrix $(\nu_{A^!})^{-1}$, 
where $\nu_{A^!}$ is in Table $1$. 
Comparering the third root of the matrix $(\nu_{A^!})^{-1}$, 
we decide $\theta$ in Table $3$
(see Lemma \ref{lem-1} and Remark \ref{rem_TL4} (1)).

By Lemma \ref{lem-1}, Lemma \ref{lem-2}, Proposition \ref{TSP}, Theorem \ref{SP}  and 
Theorem \ref{prop-3}, 
the following theorem immediately holds. 
\begin{thm}
\label{thm-RTSP}
Any potential $w$ in Table $1$ is a regular twisted superpotential. 
\end{thm}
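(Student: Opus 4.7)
The plan is to derive the theorem as an immediate consequence of the apparatus already assembled in Proposition \ref{TSP}, Theorem \ref{SP}, Theorem \ref{prop-3}, and Lemma \ref{lem-2}. The structure of the proof is simply to chain these results: pass from a Table 1 potential to a Calabi-Yau superpotential via an MS twist, then invoke the twist-invariance of regularity.

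Concretely, given any potential $w$ in Table 1, I first apply Theorem \ref{prop-3}, which supplies an explicit pair $(w_{0},\theta)$, recorded in Table 3, such that $w_{0}$ is a Calabi-Yau superpotential from Table 2, $\theta \in \mathrm{Aut}(w_{0})$, and $w = (w_{0})^{\theta}$. Then I invoke Lemma \ref{lem-2}: when applied to a Calabi-Yau superpotential $w_{0}$ and an automorphism $\theta \in \mathrm{Aut}(w_{0})$, the lemma delivers directly that $w^{\theta}_{0}$ is a regular twisted superpotential. Since this applies verbatim to our $w = (w_{0})^{\theta}$, the theorem follows. Note that Lemma \ref{lem-1} is already absorbed into Lemma \ref{lem-2} via its proof, so it need not be invoked separately here.

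Since Table 1 (by Proposition \ref{TSP}) covers every type except EC, and Theorem \ref{prop-3} provides an entry in Table 3 for each row of Table 1, the verification sweeps all cases uniformly. There is no real obstacle at this stage: the substantive work, namely producing the matching Calabi-Yau superpotential $w_{0}$ together with a matrix $\theta$ that fixes $w_{0}$ up to scalar and realizes the identity $w = (\theta^{2}\otimes\theta\otimes\mathrm{id})(w_{0})$ on the nose (up to a harmless further rescaling by a diagonal $\theta'$ when needed, as used in the Type $\mathrm{T}_{1}$ proof of Theorem \ref{prop-3}), has already been carried out type by type. Thus the final assembly is a one-line combination of the cited results.
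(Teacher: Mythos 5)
Your proposal is correct and follows essentially the same route as the paper, which likewise deduces the theorem immediately by combining Proposition \ref{TSP}, Theorem \ref{SP}, Theorem \ref{prop-3}, and Lemmas \ref{lem-1} and \ref{lem-2}. Your remark that Lemma \ref{lem-1} is subsumed in Lemma \ref{lem-2}, and that the residual scalar/diagonal adjustment $\theta'$ from the Type $\mathrm{T}_{1}$ computation is harmless, matches the paper's implicit treatment.
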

\begin{rem}
\label{rem_reg}
It turns out from Theorem \ref{thm-RTSP} that 
the defining relations listed in \cite[Theorem 3.1]{IM} 
are in fact those of $3$-dimensional quadratic AS-regular algebras 
(see also Remark \ref{rem_IM1}). 
\end{rem}
By Theorem \ref{prop-3},
for a $3$-dimensional AS-regular algebra $A$ except for Type EC,
there exist a Calabi-Yau superpotential $w_{0}$ 
and $\theta \in {\rm Aut}(w_{0})$ in Table $3$
such that $A \cong \mathcal{D}((w_{0})^{\theta})$.
Since $\theta \in {\rm Aut}(w_{0})$, we have that
$\mathcal{D}((w_{0})^{\theta}) \cong \mathcal{D}(w_{0})^{\theta}$
by Lemma \ref{twist}.
Since $\mathcal{D}(w_{0})$ is Calabi-Yau AS-regular,
we have the following corollary:
\begin{cor}
\label{cor_conj}
For a $3$-dimensional quadratic AS-regular algebra 
$A$ except for Type EC, 
there exist a Calabi-Yau AS-regular algebra $S$ 
and $\theta \in {\rm Aut}\,S$ 
such that $A$ is isomorphic to  $S^{\theta}$ as graded $k$-algebras. 
\end{cor}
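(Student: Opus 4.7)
The plan is to chain together, without further case analysis, the three theorems of this section together with Lemma \ref{twist}. Given a $3$-dimensional quadratic AS-regular algebra $A$ not of Type EC, I would first invoke Proposition \ref{TSP} to realize $A$ as a derivation-quotient algebra $\mathcal{D}(w)$ for some potential $w$ in Table $1$. Then I would apply Theorem \ref{prop-3} to produce a Calabi-Yau superpotential $w_0$ in Table $2$ and an element $\theta \in {\rm Aut}\,(w_0)$ such that $w = (w_0)^{\theta}$, where the right-hand side is the MS twist in the sense of Definition \ref{MStwist}.

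Next I would set $S := \mathcal{D}(w_0)$. Since $w_0$ is a Calabi-Yau superpotential by Theorem \ref{SP}, the algebra $S$ is a $3$-dimensional quadratic Calabi-Yau AS-regular algebra, as required by Definition \ref{de-pot}(2). Because $\theta \in {\rm Aut}\,(w_0)$, Lemma \ref{twist} gives
\[
A \;\cong\; \mathcal{D}(w) \;=\; \mathcal{D}((w_0)^{\theta}) \;\cong\; \mathcal{D}(w_0)^{\theta} \;=\; S^{\theta}
\]
as graded $k$-algebras. The final ingredient I need is that $\theta$ actually defines a graded algebra automorphism of $S$ (so that the twist $S^{\theta}$ makes sense as stated); this is exactly the inclusion ${\rm Aut}\,(w_0) \subset {\rm Aut}\,\mathcal{D}(w_0)$ recalled from \cite[Lemma 3.1]{MS1} just after Definition \ref{MStwist}, so $\theta \in {\rm Aut}\,S$.

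At this point there is essentially no obstacle remaining, since the corollary is a direct assembly of previously established material: the real work was carried out in Theorem \ref{prop-3}, where for each type in Table $1$ one had to exhibit the correct Calabi-Yau superpotential $w_0$ and the correct twisting automorphism $\theta$ by explicit computation, and in Theorem \ref{SP}, where AS-regularity of each $\mathcal{D}(w_0)$ was verified via the standardness criterion and the minor condition of Theorem \ref{ATV1_thm1}. The only small conceptual point worth flagging is that the type of $A$ and the type of $S$ need not coincide in general (for instance, Type P$_2$ and Type P$_3$ algebras are obtained as twists of the Calabi-Yau algebra of Type P$_1$), so Type EC must be excluded here because its Calabi-Yau models and twisting data have not been supplied by Theorem \ref{prop-3}; that case will be treated separately in Section 4 and combined with the present corollary to give Theorem \ref{conj}.
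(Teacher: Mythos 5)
Your proposal is correct and follows exactly the paper's own argument: realize $A$ as $\mathcal{D}(w)$ via Proposition \ref{TSP}, write $w=(w_0)^{\theta}$ with $w_0$ Calabi-Yau and $\theta\in{\rm Aut}\,(w_0)$ via Theorem \ref{prop-3}, and conclude with Lemma \ref{twist} and the inclusion ${\rm Aut}\,(w_0)\subset{\rm Aut}\,\mathcal{D}(w_0)$. Your explicit remark that $\theta$ indeed lies in ${\rm Aut}\,S$ is a point the paper leaves implicit, but the route is the same.
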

\section{Geometric algebras of Type EC}
We say that a geometric algebra $A=\mathcal{A}(E,\sigma)$ is 
{\it of Type EC} 
if $E$ is an elliptic curve in $\mathbb{P}^{2}$.
In this section, we give a criterion when
a geometric algebra of Type EC is a $3$-dimensional quadratic
AS-regular algebra.
\subsection{Divisors on curves and Hesse forms}
Let $E$ be a projective smooth curve over $k$.
The {\it Picard group} of $E$, 
denoted by $({\rm Pic}\,E,[\mathcal{O}_{E}],\otimes)$,
is the group of isomorphism classes of
invertible sheaves on $E$ under the operation $\otimes$ 
(see \cite[page 143]{H}).

A {\it divisor} on $E$ is an element of the free abelian group 
$$
{\rm Div}\,E:=\left\{ \sum_{p \in E}n_{p}\cdot p\,\middle |\,n_{p} \in \mathbb{Z} \right\}
$$
where only finitely many $n_{p}$ are different from zero.
We write the group of divisors $({\rm Div}\,E,0,+)$
where $0$ is the zero divisor, that is, $n_{p}=0$ for any $p \in E$.
For any divisor $D \in {\rm Div}\,E$,
there exists an invertible sheaf on $E$, denoted by $\mathcal{O}_{E}(D)$,
and the map $D \mapsto \mathcal{O}_{E}(D)$ gives a surjective homomorphism
from $({\rm Div}\,E,0,+)$ to $({\rm Pic}\,E,[\mathcal{O}_{E}],\otimes)$
(see  \cite[Proposition II 6.13]{H} and \cite[Corollary II 6.16]{H}),
that is, for any $[\mathcal{M}] \in {\rm Pic}\,E$ 
there exists a divisor $D \in {\rm Div}\,E$
such that $[\mathcal{M}]=[\mathcal{O}_{E}(D)]$.
The zero divisor $0$ maps to 
the isomorphism class $[\mathcal{O}_{E}] \in {\rm Pic}\,E$.

For $\sigma \in {\rm Aut}_{k}\,E$,
we define a map $\tilde{\sigma}:\Div E \rightarrow \Div E$ by
$$
\tilde{\sigma}\left( \sum_{p \in E}n_{p} \cdot p \right)
=\sum_{p \in E}n_{p} \cdot \sigma^{-1}(p).
$$
This map $\tilde{\sigma}$ is a group automorphism
of $(\Div E,0,+)$. 
On the other hand, for $\sigma \in {\rm Aut}_{k}\,E$,
the rule $\mathcal{M} \mapsto \sigma^{\ast}\mathcal{M}$
where $\mathcal{M}$ is an invertible sheaf on $E$
induces a group automorphism of the Picard group $\sigma^{\ast}:\Pic E \rightarrow \Pic E$.
It follows from \cite[II Ex. 6.8]{H} that, if $D \in \Div E$, then
$$
\sigma^{\ast}(\mathcal{O}_{E}(D)) \cong \mathcal{O}_{E}(\tilde{\sigma}D).
$$

Let $E$ be an elliptic curve in $\mathbb{P}^{2}$.
It is well-known that the {\it $j$-invariant} $j(E)$ classifies 
elliptic curves up to isomorphism, that is,
two elliptic curves $E$ and $E'$ in $\mathbb{P}^{2}$
are isomorphic if and only if $j(E)=j(E')$
(see \cite[Theorem IV 4.1(b)]{H}).
For $p \in E$, we define
${\rm Aut}_{k}\,(E,p):=\{ \sigma \in {\rm Aut}_{k}\,E \mid \sigma(p)=p \}$.
It follows from \cite[Corollary IV 4.7]{H} that, for every point $p \in E$,
${\rm Aut}_{k}\,(E,p)$ becomes a cyclic group of order 
$$
|{\rm Aut}_{k}\,(E,p)|
=\begin{cases}
2 &\quad\quad \text{if } j(E) \neq 0, 12^{3}, \\
6 &\quad\quad \text{if } j(E)=0, \\
4 &\quad\quad \text{if } j(E)=12^{3}.
\end{cases}
$$
For each point $o \in E$, we can define an addition $\oplus$ on $E$
so that $(E,o,\oplus)$ is an abelian group with the zero element $o$
and, for $p \in E$, the map $\sigma_{p}$ defined by
$\sigma_{p}(q):=p \oplus q$ is a scheme automorphism of $E$,
called the {\it translation} by a point $p$.

\subsection{Type EC}
Throughout this subsection, 
for an elliptic curve $E$ in $\mathbb{P}^{2}$, 
we use a {\it Hesse form}
$E=\mathcal{V}(x^{3}+y^{3}+z^{3}-3\la xyz)$
where $\la \in k$ with $\la^{3} \neq 1$.
The {\it $j$-invariant} of a Hesse form is given by the following formula
(see \cite[Proposition 2.16]{F}): 
$$
j(E)=\frac{27\la^{3}(\la^{3}+8)^{3}}{(\la^{3}-1)^{3}}. 
$$

We fix the group structure on $E$ with the zero element
$o_{E}:=(1:-1:0) \in E$.
Every automorphism $\sigma \in {\rm Aut}_{k}\,E$
can be written as $\sigma=\sigma_{p}\tau^{i}$
where $p \in E$, $\tau$ is a generator of ${\rm Aut}_{k}\,(E,o_{E})$
and $i \in \mathbb{Z}_{|\tau|}$ (\cite[Proposition 4.5 and Theorem 4.6]{IM}).

We call a point $p \in E$ {\it $3$-torsion} if $p \oplus p \oplus p=o_{E}$.
We set $E[3]:=\{ p \in E \mid p \oplus p \oplus p=o_{E} \}$.
For $p\in E$ and $i\in \mathbb{Z}$,
$A=\mathcal{A}(E,\sigma_{p}\tau^{i})$ is of Type EC 
if and only if $p \in E \setminus E[3]$ (\cite[Lemma 4.14]{IM}).

The map $p \mapsto [\mathcal{O}_{E}(p-o_{E})]$ is an injective homomorphism
from $(E,o_{E},\oplus)$ to $(\Pic E,[\mathcal{O}_{E}],\otimes)$
(see \cite[Example IV 1.3.7]{H}).
For $p \in E$ and $n \in \mathbb{Z}$, 
we use a notation
$[n]p:=\underbrace{p \oplus \cdots \oplus p}_{n}$. 
It is easy to check the following lemma.
\begin{lem}\label{sum}
	Let $(E,o_{E},\oplus)$ be an elliptic curve in $\mathbb{P}^{2}$,
	$p \in E$ and $n \in \mathbb{Z}$. Then
	$$
	[\mathcal{O}_{E}([n]p-o_{E})]=[\mathcal{O}_{E}(n(p-o_{E}))].
	$$
\end{lem}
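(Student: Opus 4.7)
The plan is to deduce the identity directly from the two group-homomorphism properties recorded just before the lemma. First, the authors note that the map
$$\phi \colon (E, o_{E}, \oplus) \longrightarrow (\Pic E, [\mathcal{O}_{E}], \otimes), \qquad p \longmapsto [\mathcal{O}_{E}(p - o_{E})]$$
is an injective group homomorphism. Since $[n]p$ is by definition the $n$-fold sum $p \oplus \cdots \oplus p$ in $(E, o_{E}, \oplus)$, applying $\phi$ and using that $\phi$ respects the group operations gives
$$[\mathcal{O}_{E}([n]p - o_{E})] \;=\; \phi([n]p) \;=\; \phi(p)^{\otimes n} \;=\; [\mathcal{O}_{E}(p - o_{E})]^{\otimes n}.$$

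Second, the surjection $\psi \colon (\Div E, 0, +) \to (\Pic E, [\mathcal{O}_{E}], \otimes)$ sending $D \mapsto [\mathcal{O}_{E}(D)]$, which the authors cite from \cite[Proposition II 6.13]{H} and \cite[Corollary II 6.16]{H}, is likewise a group homomorphism. Applied to the divisor $n(p - o_{E}) \in \Div E$, this yields
$$[\mathcal{O}_{E}(n(p - o_{E}))] \;=\; [\mathcal{O}_{E}(p - o_{E})]^{\otimes n}.$$
Chaining the two equalities then gives the claim. The cases $n = 0$ and $n < 0$ require no separate argument, since $\mathcal{O}_{E}(-D) \cong \mathcal{O}_{E}(D)^{\vee}$ realizes the inverse in $\Pic E$ consistently on both sides.

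There is essentially no obstacle here: the lemma is a formal bookkeeping identity that follows by combining two homomorphism statements already in the excerpt. Its purpose, presumably, is to let subsequent arguments translate between the group law on $E$ (torsion conditions such as $[n]p = o_{E}$, which appear already in the characterization of Type EC via $p \in E \setminus E[3]$) and the corresponding statements about isomorphism classes of invertible sheaves, which is the language in which Theorem \ref{ATV1}(2) expresses the AS-regularity criterion.
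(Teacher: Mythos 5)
Your proof is correct, and since the paper itself offers no argument beyond ``It is easy to check the following lemma,'' your derivation --- chaining the group homomorphism $p \mapsto [\mathcal{O}_{E}(p-o_{E})]$ with the homomorphism $\Div E \to \Pic E$ --- is exactly the intended one.
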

%

Since the zero element $o_{E}=(1:-1:0)$ is an inflection point of $E$,
it follows that
$\mathcal{L} \cong \mathcal{O}_{E}(3o_{E})$
where $\mathcal{L}=\pi^{\ast}(\mathcal{O}_{\mathbb{P}^{2}}(1))$.
\begin{lem}[cf. {\rm \cite[Lemma 4.5]{Mo}}]\label{Lemma1}
	Let $\pi:E \rightarrow \mathbb{P}^{2}$ 
	be the embedding and $\mathcal{L}=\pi^{\ast}\mathcal{O}_{\mathbb{P}^{2}}(1)$.
	Then an automorphism $\sigma \in {\rm Aut}_{k}E$ can be extended to an automorphism
	of $\mathbb{P}^{2}$ if and only if $\sigma^{\ast}\mathcal{L} \cong \mathcal{L}$.
\end{lem}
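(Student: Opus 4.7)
The plan is to treat the two implications separately. The forward direction is essentially formal functoriality of pullback, while the reverse direction uses the fact that $\pi$ is the projective embedding associated to the complete linear system $|\mathcal{L}|$.

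For the forward implication, suppose $\sigma \in {\rm Aut}_k\, E$ extends to $\tilde\sigma \in {\rm Aut}_k\,\mathbb{P}^2$, meaning $\pi \circ \sigma = \tilde\sigma \circ \pi$. Since ${\rm Aut}_k\,\mathbb{P}^2 = {\rm PGL}_3(k)$ acts on ${\rm Pic}\,\mathbb{P}^2 \cong \mathbb{Z}$ trivially, we have $\tilde\sigma^{\ast}\mathcal{O}_{\mathbb{P}^2}(1) \cong \mathcal{O}_{\mathbb{P}^2}(1)$. Then by functoriality,
$$
\sigma^{\ast}\mathcal{L} = \sigma^{\ast}\pi^{\ast}\mathcal{O}_{\mathbb{P}^2}(1) = \pi^{\ast}\tilde\sigma^{\ast}\mathcal{O}_{\mathbb{P}^2}(1) \cong \pi^{\ast}\mathcal{O}_{\mathbb{P}^2}(1) = \mathcal{L}.
$$

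For the reverse implication, suppose $\sigma^{\ast}\mathcal{L} \cong \mathcal{L}$. Since $E$ is an elliptic curve in $\mathbb{P}^2$, $\mathcal{L}$ has degree $3$, so by Riemann-Roch $\dim_{k} H^{0}(E,\mathcal{L}) = 3$, and $\mathcal{L}$ is very ample with $\pi$ being the embedding determined by the complete linear system $|\mathcal{L}|$. Identify $V := H^{0}(E,\mathcal{L}) = H^{0}(\mathbb{P}^2, \mathcal{O}_{\mathbb{P}^2}(1))$, so that $\mathbb{P}^2 = \mathbb{P}(V^{\ast})$. A choice of isomorphism $\phi : \sigma^{\ast}\mathcal{L} \xrightarrow{\sim} \mathcal{L}$ yields a linear automorphism
$$
\Phi : V \xrightarrow{\sigma^{\ast}} H^{0}(E, \sigma^{\ast}\mathcal{L}) \xrightarrow{H^{0}(\phi)} V,
$$
which descends to an element $\tilde\sigma \in {\rm PGL}(V^{\ast}) = {\rm Aut}_{k}\,\mathbb{P}^2$. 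The class of $\tilde\sigma$ is independent of the scalar ambiguity in $\phi$, so $\tilde\sigma$ is well-defined.

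The main step is then to check $\tilde\sigma \circ \pi = \pi \circ \sigma$; this is the only real content of the argument. Tracking the construction, $\pi(p)$ corresponds to the hyperplane $\{s \in V \mid s(p) = 0\}$, and the pullback $\sigma^{\ast}$ (followed by $\phi$) sends this hyperplane to $\{s \in V \mid s(\sigma(p)) = 0\}$, which is exactly $\pi(\sigma(p))$. Hence $\tilde\sigma$ extends $\sigma$, completing the proof. The expected obstacle is just bookkeeping through this identification of points with hyperplanes; conceptually the argument is that any two embeddings of $E$ by the same complete linear system differ by an element of ${\rm PGL}(V^{\ast})$, and $\mathcal{L} \cong \sigma^{\ast}\mathcal{L}$ ensures the embeddings $\pi$ and $\pi \circ \sigma$ arise this way.
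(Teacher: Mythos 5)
Your proof is correct. The paper itself gives no argument for this lemma --- it is stated with only a citation to \cite[Lemma 4.5]{Mo} --- and your two-directional argument (functoriality of pullback for the easy direction; the fact that $\pi$ is the embedding by the complete linear system $|\mathcal{L}|$, so that an isomorphism $\sigma^{\ast}\mathcal{L}\cong\mathcal{L}$ induces a linear automorphism of $V=H^{0}(E,\mathcal{L})=H^{0}(\mathbb{P}^{2},\mathcal{O}_{\mathbb{P}^{2}}(1))$ and hence an element of $\mathrm{PGL}_{3}(k)$ intertwining $\pi$ and $\pi\circ\sigma$) is exactly the standard proof one would find in the cited source. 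The only points worth tightening are the justification that restriction $H^{0}(\mathbb{P}^{2},\mathcal{O}_{\mathbb{P}^{2}}(1))\to H^{0}(E,\mathcal{L})$ is an isomorphism (injective since the cubic $E$ lies in no line, and both sides are $3$-dimensional), which licenses your identification of $V$ with the linear forms on $\mathbb{P}^{2}$, and the sign/inverse convention in passing from $\Phi$ on $V$ to the induced automorphism of $\mathbb{P}(V^{\ast})$; both are routine.
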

A $3$-dimensional {\it Sklyanin algebra} is defined by 
$\mathcal{A}(E,\sigma_{p})$ where $p=(a:b:c)\in E \setminus E[3]$
and the defining relations are given as follows: 
$$
\begin{cases}
ayz+bzy+cx^{2}, \\
azx+bxz+cy^{2}, \\
axy+byx+cz^{2}. 
\end{cases}
$$
A $3$-dimensional Sklyanin algebra $\mathcal{A}(E,\sigma_{p})$
is a $3$-dimensional quadratic AS-regular algebra by \cite[Section 1]{ATV1}.
It follows from \cite[Theorem 4.12 (1)]{IM} that
$\sigma_{p}\tau^{i}$ is not extended to an automorphism of $\mathbb{P}^{2}$,
so, by Lemma \ref{Lemma1},
$$
(\sigma_{p}\tau^{i})^{\ast}\mathcal{L} \not\cong \mathcal{L}.
$$

For a geometric algebra $A=\mathcal{A}(E,\sigma)$ of Type EC, 
we give a criterion when $A$ is AS-regular. 
\begin{thm}\label{Main}
	Let $A=\mathcal{A}(E,\sigma)$ be a geometric algebra of Type EC 
	where $\sigma=\sigma_{p}\tau^{i}$, $p \in E \backslash E[3]$ and $i \in \mathbb{Z}_{|\tau|}$. 
	Then the following are equivalent{\rm :}
	\begin{enumerate}[{\rm (1)}]
		\item $A$ is a $3$-dimensional quadratic AS-regular algebra.
		\item $p \ominus \tau^{i}(p) \in E[3]$.
		\item $A$ is graded Morita equivalent to
		a $3$-dimensional Sklyanin algebra $\mathcal{A}(E,\sigma_{p})$.
	\end{enumerate}
\end{thm}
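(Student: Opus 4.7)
The plan is to prove the three-way equivalence by establishing (1) $\Leftrightarrow$ (2), then (3) $\Rightarrow$ (1), and finally the substantive direction (2) $\Rightarrow$ (3). Throughout I will work with $\sigma = \sigma_p \tau^i$, the flex basepoint $o_E = (1:-1:0)$ (so $\mathcal{L} \cong \mathcal{O}_E(3o_E)$), and the injection $p \mapsto [\mathcal{O}_E(p - o_E)]$ from $(E, o_E, \oplus)$ into $(\Pic E, [\mathcal{O}_E], \otimes)$.

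For (1) $\Leftrightarrow$ (2), I apply Theorem \ref{ATV1}(2). Since $\tau^i$ fixes $o_E$ (it lies in $\mathrm{Aut}_k(E, o_E)$) and is a group automorphism, I first compute $\sigma^{-1}(o_E) = \tau^{-i}(\ominus p) = \ominus \tau^{-i}(p)$ and $\sigma^{-2}(o_E) = \ominus \tau^{-i}(p) \ominus \tau^{-2i}(p)$. Using $\sigma^{\ast}\mathcal{O}_E(D) \cong \mathcal{O}_E(\tilde{\sigma}D)$ from the preliminaries and Lemma \ref{sum}, the condition $\sigma^{\ast}\mathcal{L} \not\cong \mathcal{L}$ becomes $\tau^{-i}(p) \notin E[3]$, which holds because $E[3]$ is $\tau$-invariant and $p \notin E[3]$ by hypothesis. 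The second condition $(\sigma^{2})^{\ast}\mathcal{L} \otimes \mathcal{L} \cong (\sigma^{\ast}\mathcal{L})^{\otimes 2}$ rewrites via divisors as $\mathcal{O}_E(3\sigma^{-2}(o_E) + 3o_E - 6\sigma^{-1}(o_E)) \cong \mathcal{O}_E$, which in the group $(E, o_E, \oplus)$ reduces to $[3](\tau^{-i}(p) \ominus \tau^{-2i}(p)) = o_E$. Applying the group automorphism $\tau^{2i}$ (which preserves $E[3]$) yields condition (2). This step is purely computational.

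For (3) $\Rightarrow$ (1), the Sklyanin algebra $\mathcal{A}(E, \sigma_p)$ with $p \in E \setminus E[3]$ is a $3$-dimensional quadratic AS-regular algebra (as noted in the text following the definition of Sklyanin algebras), and AS-regularity is preserved under graded Morita equivalence by combining Lemma \ref{twistingequi} (any graded Morita equivalent algebra is a Zhang twist) with Lemma \ref{twist_inv} (Zhang twists preserve $3$-dimensional quadratic AS-regularity). The harder direction (2) $\Rightarrow$ (3) is where the real work lies. Granted (1) $\Leftrightarrow$ (2), I know that $A = \mathcal{A}(E, \sigma_p\tau^i)$ is a $3$-dimensional quadratic AS-regular algebra. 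Because $\tau$ fixes $o_E$, Lemma \ref{Lemma1} lets me lift $\tau$ to some $\tilde{\tau} \in \mathrm{GL}_3(k)$ preserving $E$. The plan is to exhibit a twisting system $\theta$ of the Sklyanin algebra $\mathcal{A}(E, \sigma_p)$ (built from $\tilde{\tau}^i$ together with a translation-correction needed to absorb the discrepancy $p \ominus \tau^i(p) \in E[3]$) such that $\mathcal{A}(E, \sigma_p)^{\theta} \cong A$ as graded $k$-algebras; Lemma \ref{twistingequi} then delivers the graded Morita equivalence in (3). Concretely, one verifies the isomorphism by comparing defining data: the relations of both sides are the kernels of the multiplication maps into $H^0(E, \mathcal{L} \otimes \mathcal{L}^{\sigma})$ and $H^0(E, \mathcal{L} \otimes \mathcal{L}^{\sigma_p})$ respectively, and the $E[3]$-hypothesis supplies the isomorphism of line bundles $\mathcal{L} \otimes \mathcal{L}^{\sigma_p\tau^i} \cong \mathcal{L} \otimes \mathcal{L}^{\sigma_p}$ (via an iterated use of Lemma \ref{sum}) needed to match the two kernels after twisting.

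The main obstacle is precisely this construction of the twisting system and the verification that its action on the Sklyanin algebra produces $\mathcal{A}(E, \sigma_p \tau^i)$: one must keep track of how twisting by $\tilde{\tau}^i$ alters the geometric pair $(E, \sigma_p)$ in a way that matches the genuine change of automorphism on the right-hand side, and use that $p \ominus \tau^i(p) \in E[3]$ exactly cancels the obstruction on $\Pic E$. A cleaner alternative I would attempt in parallel is to quote or reprove a graded-Morita classification for geometric algebras on the same $E$ that reduces the claim to the statement $(\sigma_p\tau^i)^{\ast}\mathcal{L} \otimes \sigma_p^{\ast}\mathcal{L}^{-1} \cong \mathcal{O}_E$, which is again equivalent to condition (2) by the same divisor calculation.
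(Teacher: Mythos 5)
Your handling of (1) $\Leftrightarrow$ (2) and of (3) $\Rightarrow$ (1) is correct and essentially identical to the paper's: the divisor computation via $\mathcal{L}\cong\mathcal{O}_{E}(3o_{E})$, $\sigma^{\ast}\mathcal{O}_{E}(D)\cong\mathcal{O}_{E}(\tilde{\sigma}D)$ and Lemma \ref{sum} is exactly what the paper does for (1) $\Rightarrow$ (2), and your observation that every step is reversible (plus your direct check that $\sigma^{\ast}\mathcal{L}\not\cong\mathcal{L}$ follows from $p\notin E[3]$, where the paper instead cites \cite[Theorem 4.12 (1)]{IM}) legitimately gives (2) $\Rightarrow$ (1) from Theorem \ref{ATV1} as well. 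The combination of Lemmas \ref{twistingequi} and \ref{twist_inv} for (3) $\Rightarrow$ (1) is also the paper's argument.

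The genuine gap is (2) $\Rightarrow$ (3), which is load-bearing for the three-way equivalence in both your scheme and the paper's. The paper disposes of this implication in one line by invoking \cite[Theorem 4.20]{IM}, which asserts precisely that $p\ominus\tau^{i}(p)\in E[3]$ forces $\mathcal{A}(E,\sigma_{p}\tau^{i})$ and $\mathcal{A}(E,\sigma_{p})$ to be graded Morita equivalent. You instead propose to build a twisting system on the Sklyanin algebra from a lift $\tilde{\tau}^{i}\in\mathrm{GL}_{3}(k)$ of $\tau^{i}$ ``together with a translation-correction,'' but this construction is never carried out, and it is exactly where the difficulty lives: a linear lift of $\tau^{i}$ need not preserve the relation space of $\mathcal{A}(E,\sigma_{p})$, so it does not by itself define a twisting system, and if a naive twist by such a lift worked one would obtain the equivalence for \emph{every} $p$, contradicting the theorem. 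The hypothesis $p\ominus\tau^{i}(p)\in E[3]$ must enter through a genuine criterion for graded Morita equivalence of geometric algebras $\mathcal{A}(E,\sigma)$ and $\mathcal{A}(E,\sigma')$ on the same curve (the Mori--Ueyama machinery underlying \cite[Theorem 4.20]{IM}); your ``cleaner alternative'' of reducing to $(\sigma_{p}\tau^{i})^{\ast}\mathcal{L}\cong\sigma_{p}^{\ast}\mathcal{L}$ presupposes exactly that criterion without quoting or proving it. As written, this step is a plan rather than a proof, and you acknowledge as much; to close it you should either cite \cite[Theorem 4.20]{IM} (as the paper does) or actually exhibit the twisting system and verify that the twisted multiplication reproduces the kernel of $\mu$ for the pair $(E,\sigma_{p}\tau^{i})$.
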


\begin{proof}
	(1) $\Rightarrow$ (2):
	Assume that $A=\mathcal{A}(E,\sigma)$ is a $3$-dimensional quadratic AS-regular algebra.
       By Theorem \ref{ATV1}, it holds that
	$$
	(\sigma^{2})^{\ast}\mathcal{L} \otimes_{\mathcal{O}_{E}} \mathcal{L} \cong
	\sigma^{\ast}\mathcal{L} \otimes_{\mathcal{O}_{E}} \sigma^{\ast}\mathcal{L}.
	$$
	Since $\mathcal{L} \cong \mathcal{O}_{E}(3o_{E})$,
	\begin{align*}
	&\sigma^{\ast}\mathcal{L} \cong \sigma^{\ast}(\mathcal{O}_{E}(3o_{E}))
	\cong \mathcal{O}_{E}
	(\widetilde{\sigma}
	(3o_{E}))
	=\mathcal{O}_{E}(3q), \\
	&(\sigma^{2})^{\ast}\mathcal{L} \cong (\sigma^{2})^{\ast}(\mathcal{O}_{E}(3o_{E}))
	\cong \mathcal{O}_{E}
	(\widetilde{\sigma^{2}}
	(3o_{E}))
	=\mathcal{O}_{E}(3r),
	\end{align*}
	where $q:=\sigma^{-1}(o_{E})=\ominus\tau^{-i}(p)$ and
	$r:=\sigma^{-2}(o_{E})=\sigma^{-1}(q)=q \oplus \tau^{-i}(q)$, so
	\begin{align*}
	&(\sigma^{2})^{\ast}\mathcal{L} \otimes_{\mathcal{O}_{E}} \mathcal{L}
	\cong \mathcal{O}_{E}(3r) \otimes_{\mathcal{O}_{E}} \mathcal{O}_{E}(3o_{E})
	\cong \mathcal{O}_{E}(3(r+o_{E})), \\
	&\sigma^{\ast}\mathcal{L} \otimes_{\mathcal{O}_{E}} \sigma^{\ast}\mathcal{L}
	\cong \mathcal{O}_{E}(3q) \otimes_{\mathcal{O}_{E}} \mathcal{O}_{E}(3q)
	\cong \mathcal{O}_{E}(6q).
	\end{align*}
	Therefore,
	\begin{align*}
	(\sigma^{2})^{\ast}\mathcal{L} \otimes_{\mathcal{O}_{E}} \mathcal{L} \cong
	\sigma^{\ast}\mathcal{L} \otimes_{\mathcal{O}_{E}} \sigma^{\ast}\mathcal{L}
	&\Longrightarrow
	\mathcal{O}_{E}(3(r+o_{E})) \cong \mathcal{O}_{E}(6q) \\
	&\Longrightarrow
	\mathcal{O}_{E}(3(r-o_{E})) \cong \mathcal{O}_{E}(6(q-o_{E})) \\
	&\Longrightarrow
	\mathcal{O}_{E}([3]r-o_{E}) \cong \mathcal{O}_{E}([6]q-o_{E}) \\
	&\Longrightarrow
	[3]r=[6]q \\
	&\Longrightarrow
	[3](q \ominus \tau^{-i}(q))=o_{E} \\
	&\Longrightarrow
	q \ominus \tau^{-i}(q) \in E[3].
	\end{align*}
	Since $q=\ominus \tau^{-i}(p)$,
	$
	q \ominus \tau^{-i}(q)=\ominus \tau^{-i}(p) \oplus \tau^{-2i}(p)=\tau^{-2i}(p \ominus \tau^{i}(p))
	$. 
	Hence we have
	$
	p \ominus \tau^{i}(p) \in E[3]
	$. 
	
	(2) $\Rightarrow$ (3):
	Assume that $p \ominus \tau^{i}(p) \in E[3]$.
	By \cite[Theorem 4.20]{IM}, $A=\mathcal{A}(E,\sigma_{p}\tau^{i})$ and $\mathcal{A}(E,\sigma_{p})$
	are graded Morita equivalent.
	
	(3) $\Rightarrow$ (1):
	Assume that $A=\mathcal{A}(E,\sigma_{p}\tau^{i})$ is graded Morita equivalent to
	a $3$-dimensional Sklyanin algebra $\mathcal{A}(E,\sigma_{p})$.
	By Lemma \ref{twistingequi}, $A$ is isomorphic to a twisted algebra of $\mathcal{A}(E,\sigma_{p})$.
	Since being AS-regular is invariant under twisting system by Lemma \ref{twist_inv},
	a twisted algebra of $\mathcal{A}(E,\sigma_{p})$ is a $3$-dimensional quadratic AS-regular algebra.
	Therefore, $A=\mathcal{A}(E,\sigma_{p}\tau^{i})$ is a $3$-dimensional quadratic AS-regular algebra.
\end{proof}
Now, we are ready to prove Theorem \ref{conj} 
in 
the 
 Introduction. 
\begin{thm}
\label{Main2}
For every $3$-dimensional quadratic AS-regular algebra $A$, 
   there exists a Calabi-Yau AS-regular algebra $S$ 
  such that $A$ and $S$ are graded Morita equivalent. 
\end{thm}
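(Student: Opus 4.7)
The plan is to split the argument according to whether the point scheme of $A$ is an elliptic curve or not, and to invoke Corollary \ref{cor_conj} and Theorem \ref{Main} respectively.

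When $A$ is not of Type EC, Corollary \ref{cor_conj} already produces a Calabi-Yau AS-regular algebra $S$ together with $\theta \in {\rm Aut}\,S$ such that $A \cong S^{\theta}$ as graded $k$-algebras. Since $S^{\theta}$ is in particular a twisted algebra of $S$ in the sense of Zhang, Lemma \ref{twistingequi} gives
\[
{\rm GrMod}\,A \simeq {\rm GrMod}\,S^{\theta} \simeq {\rm GrMod}\,S,
\]
that is, $A$ and $S$ are graded Morita equivalent.

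When $A$ is of Type EC, Theorem \ref{Main} (specifically the implication (1) $\Rightarrow$ (3)) already shows that $A$ is graded Morita equivalent to some $3$-dimensional Sklyanin algebra $S = \mathcal{A}(E,\sigma_{p})$ with $p = (a:b:c) \in E \setminus E[3]$. The remaining task is therefore to certify that every such Sklyanin algebra is Calabi-Yau AS-regular. I would take the potential
\[
w := a(xyz + yzx + zxy) + b(xzy + zyx + yxz) + c(x^{3} + y^{3} + z^{3}) \in V^{\otimes 3},
\]
check directly that $\varphi(w) = w$, so that $w$ is a superpotential, and compute $\partial_{x}w, \partial_{y}w, \partial_{z}w$ to recover the defining relations $ayz+bzy+cx^{2},\ azx+bxz+cy^{2},\ axy+byx+cz^{2}$ of $S$. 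This yields $S \cong \mathcal{D}(w)$. Since $S$ is already known to be AS-regular by \cite{ATV1}, Lemma \ref{CY-superpotential} then immediately gives that $w$ is Calabi-Yau, so $S$ is a Calabi-Yau AS-regular algebra.

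No step is especially subtle: the non-EC half is a clean repackaging of the classification work of Section 3 via Corollary \ref{cor_conj}, and the EC half reduces entirely to the classical fact, transparent from the superpotential $w$ above, that every $3$-dimensional Sklyanin algebra is Calabi-Yau. The only point requiring care is the compatibility of graded Morita equivalence (defined here as equivalence of ${\rm GrMod}$ categories) with the algebraic content of the two halves; for the non-EC case this is exactly Lemma \ref{twistingequi}, and for the EC case it is already built into the statement of Theorem \ref{Main}.
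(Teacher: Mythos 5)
Your proposal is correct and follows essentially the same route as the paper: the non-EC case is handled by Corollary \ref{cor_conj} together with Lemma \ref{twistingequi}, and the EC case by reducing via Theorem \ref{Main} to a $3$-dimensional Sklyanin algebra and verifying it is Calabi-Yau through its superpotential $w$ and Lemma \ref{CY-superpotential}. The only difference is that you spell out the verification that $\varphi(w)=w$ and that the partial derivatives recover the Sklyanin relations, which the paper leaves implicit.
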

\begin{proof}
      \underline{Except for Type EC}: 
      By Lemma \ref{twistingequi} and Corollary \ref{cor_conj}, 
      the statement holds. 
      
      \underline{For Type EC}:
      Let $A=\mathcal{A}(E,\sigma_{p})=\mathcal{D}(w)$ be a $3$-dimensional Sklyanin algebra
      where $p=(a:b:c) \in E \setminus E[3]$ and
      $w=a(xyz+yzx+zxy)+b(xzy+zyx+yxz)+c(x^{3}+y^{3}+z^{3})$.
      Since $w$ is a superpotential,
      by \ref{CY-superpotential}, 
      $\mathcal{D}(w)$ is Calabi-Yau AS-regular. 
      By Theorem \ref{Main}, the statement holds. 
\end{proof}
In Section 3, Corollary \ref{cor_conj} tells us that, 
for a $3$-dimensional AS-regular algebra except for Type EC, 
there exist a Calabi-Yau AS-regular algebra $S$ 
and $\theta \in {\rm Aut}\,S$ 
such that $A$ is isomorphic to  $S^{\theta}$ as graded $k$-algebras. 
We prove this by using Theorem \ref{prop-3}, 
that is, for  a potential $w$ in Theorem \ref{TSP}, 
there exist a superpotential $w_{0}$ in Theorem \ref{SP} and 
$\theta \in {\rm Aut}\,(w_{0})$ such that 
	$\mathcal{D}(w)\cong\mathcal{D}((w_{0})^{\theta})$.
On the other hand, 
it follows from \cite[Theorem 4.9]{IM} that, 
for a potential $w$ of a geometric algebra of Type EC, 
there exist a Calabi-Yau superpotential $w_{0}$  and 
$\theta \in {\rm GL}\,(V)$ induced by $\tau^{i}\in {\rm Aut}_{k}\,E$ 
such that $w=(w_{0})^{\theta}$.
But this $\theta$ is not necessary in ${\rm Aut}\,(w_{0})$, 
so $w=(w_{0})^{\theta}$ may not be a twisted superpotential nor regular 
(see Example \ref{examEC}).
This means that we do not know whether Theorem \ref{prop-3} holds or not
for Type EC. 
So, we need to divide the proof of Theorem \ref{conj} into two cases of 
non Type EC and Type EC. 
\begin{exa}
	Let $E=\mathcal{V}(x^{3}+y^{3}+z^{3}-3 \la xyz)$ be an elliptic curve in $\mathbb{P}^{2}$
	with $j(E) \neq 0,12^{3}$ and $A=\mathcal{A}(E,\sigma_{p}\tau)$ where $p \in E \setminus E[3]$.
	In this case, we have that $\tau(p)=\ominus p$. By Theorem \ref{Main},
	$A$ is a $3$-dimensional quadratic AS-regular algebra if and only if $[2]p \in E[3]$, that is, $p \in E[6]$
	where $E[6]:=\{ q \in E \mid [6]q=o_{E} \}$.
\end{exa}

\begin{exa}\label{examEC}
	In general,
	it is not true that if $w$ is a regular superpotential and
	$\theta \in {\rm GL}(V)$, then the MS twist $w^{\theta}$ is regular. 
	Let $E=\mathcal{V}(x^{3}+y^{3}+z^{3}-3\lambda xyz)$ be an elliptic curve with $j(E) \neq 0,12^{3}$
	and $A=\mathcal{A}(E,\sigma_{p}\tau)$ where $p=(a:b:c) \in E \setminus E[6]$.
	By Theorem \ref{Main}, $A$ is not AS-regular. 
	By \cite[Theorem 4.6]{IM} and \cite[Theorem 4.9]{IM},
	we have that $A=\mathcal{A}(E,\sigma_{p}\tau)=\mathcal{D}(w^{\theta})$
	where 
	$w=a(xyz+yzx+zxy)+b(xzy+yxz+zyx)+c(x^{3}+y^{3}+z^{3})$ and
	$\theta= \left(
	\begin{array}{ccc}
	0& 1& 0 \\
	1& 0& 0 \\
	0& 0& 1
	\end{array}
	\right) \in {\rm GL}_{3}(k)$.
	Note that if $a \neq b$, then $\theta \notin {\rm Aut}\,(w)$.
Since $A$ is not a $3$-dimensional quadratic AS-regular algebra,
$w^{\theta}$ is not regular.
\end{exa}
\proof[Acknowledgements]
The authors thank the referee for helpful comments 
in improving the paper.
They are also grateful to Professor Izuru Mori 
for his support and helpful discussions. 
Moreover, they appreciate Shinichi Hasegawa and Kosuke Shima 
for their helping to build and to check 
Table 1 in Proposition \ref{TSP}.
For Remark \ref{rem_TL4} (2), the authors thank 
Professor Andrew Conner and Professor Peter Goetz 
for telling them a typo for the relation of Type TL$_{4}$. 
The first author was supported by 
Grants-in-Aid for Young Scientific Research 18K13397 
Japan Society for the Promotion of Science. 


\begin{thebibliography}{HD}  
 \bibitem[AS]{AS}
  M.~Artin and W.~Schelter, 
  Graded algebras of global dimension $3$, 
  Adv. Math. \textbf{66} (1987), 171--216. 
  
 \bibitem[AZ]{AZ} 
  M.~Artin and J.~J.~Zhang, 
  Noncommutative projective schemes, 
  Adv. Math. \textbf{109} (1994), no. 2, 228--287.
  
  \bibitem[ATV]{ATV1}
   M.~Artin, J.~Tate and M.~Van~den~Bergh, 
   Some algebras associated to automorphisms of elliptic curves, 
   The Grothendieck Festschrift, vol. 1, 
   Progress in Mathematics vol. 86 (Birkh\"auser, Basel, 1990) 33--85. 


  \bibitem[B]{B}
  R.~Bocklandt, 
  Graded Calabi Yau algebras of dimension $3$, 
  J. Pure Appl. Algebra \textbf{212} (2008), no. 1, 14--32. 
  
%

  \bibitem[BSW]{BSW}
   R.~Bocklandt, T.~Schedler and M.~Wemyss, 
   Superpotentials and higher order derivations, 
   J. Pure Appl. Algebra \textbf{214} (2010), 1501--1522. 
   
   \bibitem[D]{D}
   M.~Dubois-Violette,
   Multilinear forms and graded algebras,
   J. Algebra \textbf{317} (2007), 198--225.
   MR2360146 (2010e:16011)
   
   \bibitem[F]{F}
   H.~R.~Frium, 
   The group law on elliptic curves on Hesse form, 
   Finite fields with applications to coding theory, 
   cryptography and related areas (Oaxaca, 2001), 123--151, 
   Springer, Berlin, 2002. 
   
  \bibitem[G]{G}
  V.~Ginzburg, 
  Calabi-Yau algebras, 
  arXiv:0612139 (2007). 

  \bibitem[H]{H}
  R.~Hartshorne,
  {\it Algebraic Geometry}, 
  Graduate Texts in Mathematics, No. \textbf{52}, 
  Springer-Verlag, New York-Heidelberg, 1977.

 \bibitem[IM]{IM}
  A.~Itaba and M.~Matsuno, 
  Defining relations of $3$-dimensional quadratic AS-regular algebras, 
  Math. J. Okayama Univ., \textbf{63} (2021), 61--86.

 \bibitem[IS]{IS}
  N.~Iyudu and S.~Shkarin, 
  Classifcation of quadratic and cubic PBW algebras on
  three generators, arXiv:1806.06844 (2018).

  \bibitem[M]{Mo}
  I.~Mori, 
  Non commutative projective schemes and point schemes, 
  Algebras, Rings and Their Representations, 
  World Sci. Hackensack, N. J., (2006), 215--239. 


  \bibitem[MS1]{MS1}
   I.~Mori and S.~P.~Smith, 
   $m$-Koszul Artin-Schelter regular algebras, 
   J. Algebra. \textbf{446} (2016), 373--399. 

  \bibitem[MS2]{MS2}
  I.~Mori and S.~P.~Smith, 
  The classification of $3$-Calabi-Yau algebras with $3$ 
  generators and $3$ quadratic relations, 
  Math. Z. \textbf{287} (2017), no. 1-2, 215--241. 
  
   \bibitem[MU]{MU1} 
   I.~Mori and K.~Ueyama, 
   Graded Morita equivalences for geometric AS-regular algebras, 
   Glasg. Math. J. \textbf{55} (2013), no. 2, 241--257.
  
    
    \bibitem[NVZ]{NVZ}
    M.~Nafari, M.~Vancliff and J.~Zhang, 
    Classifying quadratic quantum $\mathbb{P}^{2}$s by using graded skew Clifford
    algebras, J. Algebra. \textbf{346} (2011), 152--164.
   
    \bibitem[RRZ]{RRZ}
    M.~Reyes, D.~Rogalski and J.~J.~Zhang, 
    Skew Calabi-Yau algebras and homological identities, 
    Adv. Math. \textbf{264} (2014), 308--354. 

  
   \bibitem[U]{Ueyama2}
   K.~Ueyama, 
   $3$-dimensional cubic Calabi-Yau algebras and superpotentials, 
   Proceedings of the 49th Symposium on Ring Theory and Representation Theory, 155--160, 
   Symp. Ring Theory Represent. Theory Organ. Comm., Shimane (2017). 
  
  \bibitem[S]{S}
  S. P. Smith, 
  Some finite dimensional algebras related to elliptic curves, 
  in Representation Theory of Algebras and Related Topics 
  (Mexico City, 1994), 
  CMS Conf. Proc, \textbf{19}. 
  Amer. Math. Soc., Providence, (1996), 315--348. 
  
  \bibitem[Z]{Z}
   J.~J.~Zhang, 
   Twisted graded algebras and equivalences of graded categories, 
   Proc. Lond. Math. Soc., \textbf{72}, (1996), 281--311. 
\end{thebibliography}
\end{document}